\batchmode
\makeatletter
\def\input@path{{"C:/Users/magno/Dropbox/Universita/Ricerche/Asymptotic Lines/Article/"}}
\makeatother
\documentclass[oneside,english]{amsart}
\usepackage[T1]{fontenc}
\usepackage[latin9]{inputenc}
\usepackage{babel}
\usepackage{units}
\usepackage{textcomp}
\usepackage{amstext}
\usepackage{amsthm}
\usepackage{amssymb}
\usepackage{graphicx}
\usepackage[all]{xy}
\usepackage[unicode=true,
 bookmarks=true,bookmarksnumbered=false,bookmarksopen=false,
 breaklinks=false,pdfborder={0 0 1},backref=page,colorlinks=false]
 {hyperref}
\hypersetup{pdftitle={Osculating Conics},
 pdfauthor={Giosuè Muratore}}

\makeatletter
%%%%%%%%%%%%%%%%%%%%%%%%%%%%%% Textclass specific LaTeX commands.
\numberwithin{equation}{section}
\numberwithin{figure}{section}
\numberwithin{table}{section}
\theoremstyle{plain}
\newtheorem{thm}{\protect\theoremname}[section]
\theoremstyle{definition}
\newtheorem{defn}[thm]{\protect\definitionname}
\theoremstyle{definition}
\newtheorem{example}[thm]{\protect\examplename}
\theoremstyle{plain}
\newtheorem{fact}[thm]{\protect\factname}
\theoremstyle{plain}
\newtheorem{lem}[thm]{\protect\lemmaname}
\theoremstyle{plain}
\newtheorem{prop}[thm]{\protect\propositionname}
\theoremstyle{remark}
\newtheorem{rem}[thm]{\protect\remarkname}

%%%%%%%%%%%%%%%%%%%%%%%%%%%%%% User specified LaTeX commands.
\subjclass[2010]{Primary 14N10; Secondary 14N15,14N35}

\makeatother

\providecommand{\definitionname}{Definition}
\providecommand{\examplename}{Example}
\providecommand{\factname}{Fact}
\providecommand{\lemmaname}{Lemma}
\providecommand{\propositionname}{Proposition}
\providecommand{\remarkname}{Remark}
\providecommand{\theoremname}{Theorem}

\begin{document}
\global\long\def\ra{\rightarrow}%
\global\long\def\M#1{\overline{M}_{0,#1}}%
\global\long\def\Mbar{\overline{M}}%
\global\long\def\ev{\mathrm{ev}}%
\global\long\def\Z{\mathbb{Z}}%
\global\long\def\Q{\mathbb{Q}}%
\global\long\def\OC{\mathrm{OC}}%

\date{\today}
\title{a recursive formula for osculating curves}
\author{Giosuè Muratore}
\address{Department of Mathematics, Universidade Federal de Minas Gerais, Belo
Horizonte, MG, Brazil.}
\begin{abstract}
Let $X$ be a smooth complex projective variety. Using a construction
devised to Gathmann, we present a recursive formula for some of the
Gromov-Witten invariants of $X$. We prove that, when $X$ is homogeneous,
this formula gives the number of osculating rational curves at a general
point of a general hypersurface of $X$. This generalizes the classical
well known pairs of inflexion (asymptotic) lines for surfaces in $\mathbb{P}^{3}$
of Salmon, as well as Darboux\textquoteright s $27$ osculating conics.
\end{abstract}

\email{muratore.g.e@gmail.com}
\keywords{Osculating, Gromov-Witten.}
\maketitle

\section{Introduction}

One of the main tool of modern enumerative geometry is the theory
of GW invariants and quantum cohomology rings, provided by ideas from
string theory. This tool permits to encode intersection conditions
on curves, as well as tangency conditions, to some calculations of
intersection of cycles. One of the milestones in the theory is Kontsevich's
recursive formula for the number of plane rational curves of degree
$d$ through $3d-1$ general points. Prior to that, no hint had been
suspected that the individual answers for each $d$ were related whatsoever. 

Our enumerative problem is the following. Fix a general smooth hypersurface
$Y$ of degree $d\ge4$ in $\mathbb{P}^{3}$. We want to count the
number of rational curves of degree $n$ in $\mathbb{P}^{3}$ with
contact order $4n-1$ at a general point of $Y$. The number of such
curves is finite. This problem is rooted in classical geometry. Salmon
knew that there are exactly two lines meeting a surface in $\mathbb{P}^{3}$
in three ``coincidence points'' \cite[§265]{salmon}. Darboux \cite[p. 372]{BSMA_1880_2_4_1_348_1}
proved that a general surface $Y$ in $\mathbb{P}^{3}$ has $27$
osculating conics, i.e., conics with contact order at least $7$ at
a general point $y$. In the case $Y$ is a cubic, he pointed out
that a conic with such a multiplicity must be contained in $Y$, so
it reduces to the residual intersection of the plane spanned by $y$
and any of the $27$ lines of $Y$. His argument for counting osculating
conics in the case that $Y$ has degree at least $4$ rests on an
intricate application of classical elimination theory, grant in vogue
then, alas not so clear for myself. 

We have learnt a nice application of inflexional lines in \cite[Theorem 4.2]{MR3877435}.
These lines are used to construct a $2$-web on $Y$. This shows a
bound on the number of lines on $Y$. Darboux's $27$ conics define
a $27$-web, that potentially could give an upper bound to the number
of conics on $Y$. Inspired by these results, we try to study osculating
curves in greater generality.
\begin{defn}
Let $X$ be a smooth complex projective variety, let $\beta$ be the
homological class of a curve and let $Y$ be a very ample smooth hypersurface
$Y\subset X$. An osculating curve $C$ of class $\beta$ is an irreducible
rational curve in $X$, not contained in $Y$, such that the intersection
index at a general point of $Y$ with $C$ is at least $c_{1}(X)\cdot\beta-1$.
We denote by $\OC(\beta,X)$ the number of osculating curves in $X$
of class $\beta$ through a general point of $Y$.
\end{defn}

In this paper we will find a formula to compute the number of osculating
curves for certain $X$ and $\beta$. Using Gathmann's construction,
we will find a recursive formula for a Gromov-Witten invariant of
$X$ relative to $Y$ (Equation (\ref{eq:abx})) under the hypothesis
that $Y$ has no rational curves. That assumption assures that the
result of such formula does not depend on $Y$. Moreover, we will
prove that this invariant coincides with $\OC(\beta,X)$ (i.e., it
is enumerative) when $X$ is homogeneous (Proposition \ref{lem:Enumerative}).
These results imply the following.
\begin{thm}
\label{prop:Centralresult}Let $X$ be a homogeneous variety, let
$\beta$ be the homological class of a curve. There exists a recursive
formula for the number of curves of class $\beta$ osculating a very
ample hypersurface.
\end{thm}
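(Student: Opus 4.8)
The plan is to read the theorem as the synthesis of the two constructions announced above, so that the proof amounts to matching the enumerative count with a relative Gromov-Witten invariant and then feeding that invariant into the recursion of Equation (\ref{eq:abx}). First I would encode $\OC(\beta,X)$ as a relative invariant of the pair $(X,Y)$: on the moduli space of genus-$0$ stable maps $\overline{M}_{0,1}(X,\beta)$, of dimension $\dim X+c_{1}(X)\cdot\beta-2$, I impose that the marked point map to a fixed general $y\in Y$ and that the map have contact order $c_{1}(X)\cdot\beta-1$ with $Y$ there. A dimension count shows these conditions cut the expected dimension down to $0$, so the resulting relative invariant is a well-defined number; this is the invariant of Equation (\ref{eq:abx}).

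Next I would invoke the recursion itself. Gathmann's comparison of relative invariants under raising the order of tangency along $Y$ expresses the invariant attached to contact order $c_{1}(X)\cdot\beta-1$ in terms of relative invariants of strictly smaller class and/or strictly smaller tangency order, together with absolute invariants of $X$. Under the standing hypothesis that $Y$ carries no rational curves, the correction terms supported on components mapping into $Y$ drop out, which is precisely what makes the output independent of the chosen $Y$ and keeps the recursion closed among invariants of the same shape. The recursion terminates because each step strictly decreases the pair (class, contact order) in a well-founded order, the base cases being low-class or low-tangency data computable from the absolute genus-$0$ Gromov-Witten invariants of the homogeneous target $X$.

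Finally, since $X$ is homogeneous, Proposition \ref{lem:Enumerative} applies: the relative invariant set up in the first step is enumerative, counting each irreducible rational curve of class $\beta$, not contained in $Y$ and with contact order at least $c_{1}(X)\cdot\beta-1$ at a general point of $Y$, with multiplicity one and with no spurious boundary contribution. Hence that invariant equals $\OC(\beta,X)$, and the recursion of Equation (\ref{eq:abx}) becomes a recursive formula for $\OC(\beta,X)$, as claimed. I expect the genuine difficulty to sit entirely in the enumerativity step: ruling out contributions from multiple covers, reducible configurations, and curves lying inside $Y$, and checking that each honest osculating curve is counted once. Homogeneity is exactly the input that makes the relevant evaluation and tangency loci meet transversally over a general point via Kleiman's theorem; this is the content of Proposition \ref{lem:Enumerative}, on which I would rely. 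A secondary, bookkeeping-level point is to confirm that the recursion really terminates and that its base cases are effectively computable for homogeneous $X$.
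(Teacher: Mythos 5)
Your proposal is correct and follows essentially the same route as the paper: the theorem is obtained exactly as you describe, by combining the enumerativity statement (Proposition \ref{lem:Enumerative}) with the recursion (\ref{eq:abx}), which is derived from Gathmann's algorithm and closes up precisely because the hypothesis that $Y$ contains no rational curves kills the correction terms with $\beta^{(0)}\neq 0$. The only point you leave implicit, which the paper handles at the end of Section \ref{sec:Osculating-curves}, is that such a $Y$ can always be arranged by replacing the given very ample hypersurface with a general member of $|\mathcal{O}_{X}(dY)|$ for $d\gg 0$.
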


For related results, see for example \cite{FanHu} and reference therein.

The paper is organized as follows. Section \ref{sec:Kontsevich-moduli-space}
and \ref{sec:Gathmann-construction} recall standard notations of
the moduli space of stable curves, Gromov-Witten invariants and Gathmann's
construction of the moduli space of curves with tangency conditions.
In Section \ref{sec:Osculating-curves}, we will study the connection
between osculating curves and Gromov-Witten invariants. Section \ref{sec:Osculating-conics}
contains the proof of the recursive formula cited before. Finally
Section \ref{sec:Applications} contains some applications. In particular,
we present an implementation of $\OC(\beta,X)$ in case $X$ is a
product of projective spaces.

The author would like to thank Angelo Lopez and Eduardo Esteves for
numerous fruitful discussions, and Jorge Vitório Pereira for pointing
me out the connection with webs. I thank Andreas Gathmann for his
help. I especially thank Israel Vainsencher for calling my attention
on this problem and for his constant support during this year at UFMG.
The author is supported by postdoctoral fellowship PNPD-CAPES.

\section{Kontsevich Moduli Space of Stable Maps}

\label{sec:Kontsevich-moduli-space}We begin by giving an informal
discussion of the main properties of the Kontsevich moduli space of
stable maps, following \cite{MR1492534} and \cite{hori2003mirror}.
Let $X$ be a smooth complex projective variety, let $\beta\in H_{2}(X,\Z)$
be a non torsion homology class, and let $Y\subset X$ be a smooth
very ample hypersurface. We denote by $Y$ or $[Y]$ the cohomology
class of the subvariety $Y$ in $H^{2}(X,\Z)$, given by Poincaré
duality. The cohomology class of a point is denoted by $\mathrm{pt}$.

For any non-negative integer $n$, we denote by $\overline{M}_{0,n}$
and $\overline{M}_{0,n}(X,\beta)$ the moduli spaces of $n$-pointed
genus zero stable curves and stable maps to $X$ of class $\beta$,
respectively. The markings provide evaluation morphisms $\ev_{i}:\overline{M}_{0,n}(X,\beta)\ra X$.
We have tautological classes $\psi_{i}:=c_{1}(\mathbb{L}_{i})$ where
$\mathbb{L}_{i}$ is the line bundle whose fiber at a stable map $(C,p_{1},...,p_{n},f)$
is the cotangent line to $C$ at point $p_{i}$. When $n=1$, we omit
the index.
\begin{defn}
The virtual dimension of $\overline{M}_{0,n}(X,\beta)$ is the number
\[
\mathrm{vdim}\overline{M}_{0,n}(X,\beta)=\dim X+c_{1}(X)\cdot\beta+n-3.
\]
\end{defn}

Since $X$ is projective, there exists a homology class, the virtual
fundamental class $[\overline{M}_{0,n}(X,\beta)]^{virt}$, of dimension
$\mathrm{vdim}\overline{M}_{0,n}(X,\beta)$ (see \cite[Chapter 26]{hori2003mirror}
for further discussions). If $X$ is also a homogeneous variety (i.e.,
a quotient $\nicefrac{G}{P}$, where $G$ is a Lie group and $P$
is a parabolic subgroup), and $\overline{M}_{0,n}(X,\beta)\neq\emptyset$,
then $\overline{M}_{0,n}(X,\beta)$ exists as a projective non singular
stack or orbifold coarse moduli space of pure dimension $\mathrm{vdim}\overline{M}_{0,n}(X,\beta)$
\cite[Theorem 1,2,3]{MR1492534}.
\begin{defn}
\label{def:descendents-invariants}For every choice of classes $\gamma_{1},...,\gamma_{n}\in H^{*}(X,\Z)$,
and non negative integers $a_{1},...,a_{n}\in\Z$ such that $\sum_{i=1}^{n}\mathrm{codim}\gamma_{i}+a_{i}=\mathrm{vdim}\overline{M}_{0,n}(X,\beta)$,
we have the numbers
\[
I_{n,\beta}^{X}(\gamma_{1}\psi_{1}^{a_{1}}\otimes\cdots\otimes\gamma_{n}\psi_{n}^{a_{n}}):=\ev_{1}^{*}(\gamma_{1})\cdot\psi_{1}^{a_{1}}\cdot...\cdot\ev_{n}^{*}(\gamma_{n})\cdot\psi_{n}^{a_{n}}\cdot[\overline{M}_{0,n}(X,\beta)]^{virt},
\]
called descendant invariants.
\end{defn}

We can extend this definition to every integer $a_{1},...,a_{n}\in\Z$,
by imposing 
\[
I_{n,\beta}^{X}(\gamma_{1}\psi_{1}^{a_{1}}\otimes\cdots\otimes\gamma_{n}\psi_{n}^{a_{n}})=0
\]
if $a_{i}<0$ for some $i$.

We adopt the well established notation that encodes all $1$-point
invariants of class $\beta$ in a single cohomology class:
\begin{eqnarray*}
I_{1,\beta}^{X} & := & \ev_{*}\left(\frac{1}{1-\psi}\left[\overline{M}_{0,1}(X,\beta)\right]^{virt}\right)\\
 & := & \sum_{i,j}I_{1,\beta}^{X}(T^{i}\psi^{j})\cdot T_{i},
\end{eqnarray*}
where $\{T^{i}\}$ and $\{T_{i}\}$ are bases of $H^{*}(X,\Z)\otimes\mathbb{Q}$
dual to each other. Note that
\[
I_{1,\beta}^{X}(T^{i}\psi^{j})
\]
is zero when $j\neq\mathrm{vdim}\overline{M}_{0,1}(X,\beta)-i$. We
define $I_{1,0}^{X}:=1_{X}$, i.e., the unity of the ring $H^{*}(X,\Z)\otimes\mathbb{Q}$.
\begin{example}
\label{exa:Pand}A very useful descendant invariant is the following.
Let $X=\mathbb{P}^{s}$, then every class $\beta$ will be of the
form $\beta=n[\mathrm{line}]$ for some positive integer $n$. It
is known by \cite[Section 1.4]{SB_1997-1998__40__307_0} that 
\[
I_{1,\beta}^{\mathbb{P}^{s}}(\mathrm{pt})=\frac{1}{(n!)^{s+1}}.
\]
\end{example}

\section{Gathmann Construction}

\label{sec:Gathmann-construction}We recall briefly the construction
given in \cite{gathmann2002absolute,gathmann2003relative}. Let $m$
be a non negative integer. There exists a closed subspace $\overline{M}_{(m)}^{Y}(X,\beta)\subseteq\M 1(X,\beta)$
which parameterizes curves with multiplicity with $Y$ at least $m$
at the marked point. As a set, it has the following simple description.
\begin{defn}[{\cite[Definition 1.1]{gathmann2002absolute}}]
\label{def:MY}The space $\overline{M}_{(m)}^{Y}(X,\beta)$ is the
locus in $\M 1(X,\beta)$ of all stable maps $(C,p,f)$ such that
\begin{enumerate}
\item $f(p)\in Y$ if $m>0$.
\item $f^{*}Y-mp$ in the Chow group $A_{0}(f^{-1}(Y))$ is effective.
\end{enumerate}
\end{defn}

Curves with multiplicity $0$ are just unrestricted curves in $X$,
whereas a multiplicity of $Y\cdot\beta+1$ forces at least the irreducible
curves to lie inside $Y$. This space comes equipped with a virtual
fundamental class $[\overline{M}_{(m)}^{Y}(X,\beta)]^{virt}$ of dimension
$\mathrm{vdim}(\M n(X,\beta))-m$. 

The explicit form of $[\overline{M}_{(m)}^{Y}(X,\beta)]^{virt}$ is
given by the following
\begin{thm}[{\cite[Theorem 0.1]{gathmann2003relative}}]
\label{thm:Gathmann}For all $m\ge0$ we have 
\begin{equation}
(m\psi+\ev^{*}Y)\cdot\left[\overline{M}_{(m)}^{Y}(X,\beta)\right]^{virt}=\left[\overline{M}_{(m+1)}^{Y}(X,\beta)\right]^{virt}+\left[D_{(m)}^{Y}(X,\beta)\right]^{virt}.\label{eq:Gathmann_little}
\end{equation}
Here, the correction term $D_{(m)}^{Y}(X,\beta)=\coprod_{r}\coprod_{B,M}D^{Y}(X,B,M)$
is a disjoint union of individual terms
\[
D^{Y}(X,B,M):=\M{1+r}(Y,\beta^{(0)})\times_{Y^{r}}\prod_{i=1}^{r}\Mbar_{(m^{(i)})}^{Y}(X,\beta^{(i)})
\]
 where $r\ge0$, $B=(\beta^{(0)},...,\beta^{(r)})$ with $\beta^{(i)}\in H_{2}(X)$/torsion
and $\beta^{(i)}\neq0$ for $i>0$, and $M=(m^{(1)},...,m^{(r)})$
with $m^{(i)}>0$. The maps to $Y^{r}$ are the evaluation maps for
the last $r$ marked points of $\Mbar_{1+r}(Y,\beta^{(0)})$ and each
of the marked points of $\Mbar_{(m^{(i)})}^{Y}(X,\beta^{(i)})$, respectively.
The union in $D_{(m)}^{Y}(X,\beta)$ is taken over all $r$, $B$,
and $M$ subject to the following three conditions:
\begin{eqnarray*}
\sum_{i=0}^{r}\beta^{(i)}=\beta &  & \mathrm{(degree\,condition)}\\
Y\cdot\beta^{(0)}+\sum_{i=1}^{r}m^{(i)}=m &  & \mathrm{(multiplicity\,condition)}\\
\mathrm{if}\,\beta^{(0)}=0\,\mathrm{then}\,r\ge2. &  & \mathrm{(stability\,condition)}
\end{eqnarray*}

In (\ref{eq:Gathmann_little}), the virtual fundamental class of the
summands $D^{Y}(X,B,M)$ is defined to be $\frac{m^{(1)}\cdots m^{(r)}}{r!}$
times the class induced by the virtual fundamental classes of the
factors $\M{1+r}(Y,\beta^{(0)})$ and $\Mbar_{(m^{(i)})}^{Y}(X,\beta^{(i)})$.
The spaces $D^{Y}(X,B,M)$ can be considered to be subspaces of $\M 1(X,\beta)$,
so the equation of the theorem makes sense in the Chow group of $\M 1(X,\beta)$.
\end{thm}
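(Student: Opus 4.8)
The plan is to prove (\ref{eq:Gathmann_little}) by realizing the jump in contact order as the vanishing of a single leading jet of the section defining $Y$, and then carefully separating the genuine order-$(m+1)$ locus from the boundary on which that jet argument degenerates. Set $L=\mathcal{O}_{X}(Y)$ and fix $s\in H^{0}(X,L)$ with zero locus $Y$. Over $\overline{M}_{(m)}^{Y}(X,\beta)$ I would form the universal curve $\pi\colon\mathcal{C}\ra\overline{M}_{(m)}^{Y}(X,\beta)$ with universal map $F\colon\mathcal{C}\ra X$ and marked section $\sigma$. By Definition \ref{def:MY} the pulled-back section $F^{*}s$ vanishes to order at least $m$ along $\sigma$, so its $m$-th order leading coefficient is a well-defined section of $\ev^{*}L\otimes\mathbb{L}^{\otimes m}$; a local computation in a coordinate $t$ at the marked point shows that rescaling $t$ by $\lambda$ rescales this coefficient by $\lambda^{-m}$, so it is genuinely a section of a line bundle whose first Chern class is $\ev^{*}Y+m\psi$, matching the left-hand side of (\ref{eq:Gathmann_little}).

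First I would treat the interior. On the locus where the marked point lies on a non-contracted component meeting $Y$ properly, the vanishing of the leading coefficient is exactly the condition that the contact order jump from $m$ to $m+1$, so set-theoretically this zero locus is $\overline{M}_{(m+1)}^{Y}(X,\beta)$. The statement is then the cycle-level refinement of the principle that the zero scheme of a section of a line bundle $\mathcal{L}$ represents $c_{1}(\mathcal{L})$, upgraded to virtual fundamental classes. This requires checking that Gathmann's obstruction theories are compatible along the tower $\cdots\subseteq\overline{M}_{(m+1)}^{Y}(X,\beta)\subseteq\overline{M}_{(m)}^{Y}(X,\beta)\subseteq\cdots$, so that capping $[\overline{M}_{(m)}^{Y}(X,\beta)]^{virt}$ with $\ev^{*}Y+m\psi$ computes the virtual class of the next space plus an excess supported on the boundary.

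The hard part will be the boundary correction, which is the geometric content of the theorem. Along strata where the component $C_{0}$ carrying the marked point maps entirely into $Y$ (of class $\beta^{(0)}$), the section $F^{*}s$ vanishes identically on $C_{0}$ and the finite-jet argument collapses; such strata therefore appear as excess in the zero locus of the leading coefficient. I would stratify this excess by the combinatorial type $(r,B,M)$ recording $\beta^{(0)}$, together with the classes $\beta^{(i)}$ and contact orders $m^{(i)}$ of the $r$ tail components glued to $C_{0}$, and identify each stratum with the fiber product $D^{Y}(X,B,M)=\M{1+r}(Y,\beta^{(0)})\times_{Y^{r}}\prod_{i=1}^{r}\Mbar_{(m^{(i)})}^{Y}(X,\beta^{(i)})$ of the statement. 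A local analysis of the order of vanishing of $F^{*}s$ as each node is smoothed produces the multiplicity weights $m^{(1)}\cdots m^{(r)}$, while the factor $1/r!$ records the $S_{r}$-symmetry permuting the structurally identical tails. The degree, multiplicity, and stability constraints cutting out the admissible $(r,B,M)$ emerge directly from the degeneration, namely $\sum_{i}\beta^{(i)}=\beta$, the additivity $Y\cdot\beta^{(0)}+\sum_{i}m^{(i)}=m$ of contact orders, and $r\ge2$ whenever $\beta^{(0)}=0$ (otherwise $C_{0}$ is unstable). Assembling the interior contribution $[\overline{M}_{(m+1)}^{Y}(X,\beta)]^{virt}$ with these weighted boundary classes yields $[D_{(m)}^{Y}(X,\beta)]^{virt}$ and hence (\ref{eq:Gathmann_little}); throughout, the genuine difficulty is that every identification must be carried out virtually, so one must establish functoriality of the virtual classes under the gluing morphisms and the base change over $Y^{r}$.
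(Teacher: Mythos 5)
First, a point of framing: the paper you are working from does not prove Theorem \ref{thm:Gathmann} at all. It is quoted, with attribution, from \cite[Theorem 0.1]{gathmann2003relative} and used as a black box, so your attempt has to be judged against Gathmann's own proof in the cited source. Measured that way, your road map does match the strategy actually used there: the $m$-th Taylor coefficient of $F^{*}s$ along the marked section is a section of $\mathbb{L}^{\otimes m}\otimes\ev^{*}\mathcal{O}_{X}(Y)$, whose first Chern class is the factor $m\psi+\ev^{*}Y$ on the left-hand side of (\ref{eq:Gathmann_little}); its zero locus on $\overline{M}_{(m)}^{Y}(X,\beta)$ consists of the locus where the contact order genuinely jumps together with the locus where the marked point lies on a subcurve mapped into $Y$; the latter is stratified by the data $(r,B,M)$, with weights $m^{(1)}\cdots m^{(r)}$ coming from smoothing the nodes and the factor $\frac{1}{r!}$ from the symmetry of the tails. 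Two corrections even at this set-theoretic level: the piece of the curve inside $Y$ is the maximal connected subcurve through the marked point, which may be reducible (this is exactly what the factor $\M{1+r}(Y,\beta^{(0)})$ parameterizes), not a single irreducible component $C_{0}$; and the excess strata are not disjoint from $\overline{M}_{(m+1)}^{Y}(X,\beta)$, so ``assembling'' the interior and boundary contributions is not an addition of disjoint pieces.

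The genuine gap is that the two steps you defer are not verifications left to the reader; they are the theorem. (a) The multiplicity statement --- that each excess stratum contributes through the fiber product $D^{Y}(X,B,M)$ with weight exactly $\frac{m^{(1)}\cdots m^{(r)}}{r!}$ --- requires controlling the vanishing order of the jet section along strata that are typically of excess dimension; this local analysis of deformations of maps with components inside $Y$ is the technical core of Gathmann's papers, and your sketch names its output without providing any mechanism for it. (b) More seriously, your plan for the virtual level (``checking that Gathmann's obstruction theories are compatible along the tower'' and ``establishing functoriality of the virtual classes under gluing and base change'') presupposes structure that is not available: the spaces $\overline{M}_{(m)}^{Y}(X,\beta)$ are not known to carry perfect obstruction theories inducing the classes $[\overline{M}_{(m)}^{Y}(X,\beta)]^{virt}$. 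Gathmann constructs these virtual classes by hand, through an inductive localization procedure that is special to genus zero and to $Y$ very ample, and proving that identity (\ref{eq:Gathmann_little}) holds for the classes so constructed is precisely what occupies the bulk of his proof. As written, your step (b) assumes the compatibility whose establishment is the content of the statement; so the proposal is a faithful outline of the known proof with its two essential steps left blank, rather than a proof.
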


Note that this theorem implies immediately the following
\begin{fact}
\label{fact:No_excess}If $D_{(m)}^{Y}(X,\beta)=0$ for all $0\le m\le n$,
then
\[
\left[\overline{M}_{(n+1)}^{Y}(X,\beta)\right]^{virt}=c_{n+1}(\mathcal{P}^{n}(Y)),
\]
where $\mathcal{P}^{n}(Y)$ is the bundle of $n$-jets of $\ev^{*}(\mathcal{O}_{X}(Y))$
with respect to the tautological line bundle $\mathbb{L}$. This follows
from the initial condition 
\[
\left[\overline{M}_{(1)}^{Y}(X,\beta)\right]^{virt}=(0\psi+\ev^{*}Y)\text{·}\left[\overline{M}_{(0)}^{Y}(X,\beta)\right]^{virt}=\ev^{*}Y=c_{1}(\mathcal{P}^{0}(Y)),
\]
and from the exact sequence
\begin{equation}
0\ra\mathbb{L}^{\otimes m}\otimes\ev^{*}(\mathcal{O}_{X}(Y))\ra\mathcal{P}^{m}(Y)\ra\mathcal{P}^{m-1}(Y)\ra0.\label{eq:jetbundle}
\end{equation}
\end{fact}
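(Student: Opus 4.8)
The plan is to run an induction on the multiplicity, feeding Gathmann's recursion (Theorem~\ref{thm:Gathmann}) with every correction term suppressed. Concretely, I would prove the strengthened statement that, under the hypothesis $D_{(m)}^{Y}(X,\beta)=0$ for $0\le m\le n$,
\[
\left[\Mbar_{(k)}^{Y}(X,\beta)\right]^{virt}=c_{k}(\mathcal{P}^{k-1}(Y))\qquad\text{for all }0\le k\le n+1,
\]
the case $k=n+1$ being the assertion. The base case is exactly the initial condition already displayed in the statement: since $\Mbar_{(0)}^{Y}(X,\beta)=\M 1(X,\beta)$ carries the full fundamental class, setting $m=0$ in (\ref{eq:Gathmann_little}) gives $[\Mbar_{(1)}^{Y}(X,\beta)]^{virt}=\ev^{*}Y=c_{1}(\mathcal{P}^{0}(Y))$.

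For the inductive step, assume the formula for some $1\le k\le n$. Since $D_{(k)}^{Y}(X,\beta)=0$, Theorem~\ref{thm:Gathmann} collapses to
\[
\left[\Mbar_{(k+1)}^{Y}(X,\beta)\right]^{virt}=(k\psi+\ev^{*}Y)\cdot c_{k}(\mathcal{P}^{k-1}(Y)).
\]
The heart of the argument is to recognize the right-hand side as $c_{k+1}(\mathcal{P}^{k}(Y))$. For this I would use the multiplicativity of the total Chern class on the exact sequence (\ref{eq:jetbundle}), observing that the sub-line-bundle $\mathbb{L}^{\otimes k}\otimes\ev^{*}(\mathcal{O}_{X}(Y))$ has first Chern class $k\psi+\ev^{*}Y$. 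Comparing the degree-$(k+1)$ parts gives
\[
c_{k+1}(\mathcal{P}^{k}(Y))=c_{k+1}(\mathcal{P}^{k-1}(Y))+(k\psi+\ev^{*}Y)\cdot c_{k}(\mathcal{P}^{k-1}(Y)),
\]
and since $\mathcal{P}^{k-1}(Y)$ has rank $k$ its $(k+1)$-st Chern class vanishes. This is precisely the reduced recursion above, closing the induction.

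Once the recursion is in hand the argument is formal, so I do not anticipate a genuine obstacle; the only delicate point is the index bookkeeping. One must keep straight that $\Mbar_{(k)}^{Y}$ corresponds to $\mathcal{P}^{k-1}$ (an off-by-one that must match in both the base case and the inductive step), and that $\mathcal{P}^{k-1}(Y)$ really has rank $k$, so that $c_{k+1}(\mathcal{P}^{k-1}(Y))=0$ and $c_{n+1}(\mathcal{P}^{n}(Y))$ is the top Chern class---consistent with $[\Mbar_{(n+1)}^{Y}(X,\beta)]^{virt}$ sitting in codimension $n+1$.
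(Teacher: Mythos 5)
Your proposal is correct and is essentially the paper's own argument: the paper justifies the Fact precisely by the initial condition $[\overline{M}_{(1)}^{Y}(X,\beta)]^{virt}=\ev^{*}Y=c_{1}(\mathcal{P}^{0}(Y))$ together with the exact sequence (\ref{eq:jetbundle}), which is exactly the induction you carry out, using Whitney multiplicativity to identify $(k\psi+\ev^{*}Y)\cdot c_{k}(\mathcal{P}^{k-1}(Y))$ with $c_{k+1}(\mathcal{P}^{k}(Y))$ since the rank-$k$ bundle $\mathcal{P}^{k-1}(Y)$ has vanishing $(k+1)$-st Chern class. Your bookkeeping (the shift between $\overline{M}_{(k)}^{Y}$ and $\mathcal{P}^{k-1}(Y)$, and the rank count) is also consistent with the paper.
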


Finally, we define descendant invariants relative to the spaces $\overline{M}_{(m)}^{Y}(X,\beta)$.
In the following definition, by $\M n(Y,\beta)$ we mean the space
of $n$-pointed stable maps to $Y$ of all homology classes whose
push-forward to $X$ is $\beta$. 
\begin{defn}[{\cite[Section 1]{gathmann2003relative}}]
\label{def:I(gammapsi)}For every $m\ge0$ and $\gamma\in H^{*}(X,\Z)$
we define 
\[
I_{\beta,(m)}(\gamma\psi^{j}):=\ev^{*}(\gamma)\cdot\psi^{j}\cdot[\overline{M}_{(m)}^{Y}(X,\beta)]^{virt},
\]
where $j=\mathrm{vdim}\M 1(X,\beta)-m-\mathrm{codim}\gamma$. We assemble
all those invariants in a unique cohomology class of $X$,
\begin{eqnarray*}
I_{\beta,(m)} & := & \ev_{*}\left(\frac{1}{1-\psi}\left[\overline{M}_{(m)}^{Y}(X,\beta)\right]^{virt}\right)\\
 & := & \sum_{i,j}I_{\beta,(m)}(T^{i}\psi^{j})\cdot T_{i}.\\
J_{\beta,(m)} & := & \ev_{*}\left(\frac{1}{1-\psi}\left[D_{(m)}^{Y}(X,\beta)\right]^{virt}\right)+m\cdot\ev_{*}\left[\overline{M}_{(m)}^{Y}(X,\beta)\right]^{virt}
\end{eqnarray*}
\end{defn}

By $\M n(Y,\beta)$ we mean the space of $n$-pointed stable maps
to $Y$ of all homology classes whose push-forward to $X$ is $\beta$.
For every integer $i=1,...,n$, we denote by $\widetilde{\ev}_{i}:\M n(Y,\beta)\ra Y$
the evaluation maps to $Y$ instead of $X$.
\begin{defn}[{\cite[Definition 5.1]{gathmann2002absolute}}]
\label{def:Relative}For cohomology classes $\gamma_{i}\in H^{*}(X,\Z)$
we define $I_{n,\beta}^{Y}(\gamma_{1}\psi_{1}^{a_{1}}\otimes\cdots\otimes\gamma_{n}\psi_{n}^{a_{n}})$
in the same way of Definition \ref{def:descendents-invariants}, replacing
$\overline{M}_{0,n}(X,\beta)$ by $\overline{M}_{0,n}(Y,\beta)$,
but keeping the $\ev_{i}$ to denote the evaluation maps to $X$.
More generally, we can take some of the cohomology classes $\gamma_{i}$
to be classes of $Y$ instead of $X$. In that case we simply use
$\widetilde{\ev}_{i}(\gamma_{i})$ instead of $\ev_{i}(\gamma_{i})$. 
\end{defn}

By construction $I_{\beta,(0)}=I_{1,\beta}^{X}$. From (\ref{eq:Gathmann_little})
it follows
\begin{lem}[{\cite[Lemma 1.2]{gathmann2003relative}}]
For all torsion free effective class $\beta\neq0,$ and $m\ge0$
we have

\begin{equation}
(Y+m)\cdot I_{\beta,(m)}=I_{\beta,(m+1)}+J_{\beta,(m)}\,\,\,\,\in H^{*}(X,\Z).\label{eq:IandJ}
\end{equation}
\end{lem}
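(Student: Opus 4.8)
The plan is to obtain (\ref{eq:IandJ}) by multiplying both sides of Gathmann's relation (\ref{eq:Gathmann_little}) by $\frac{1}{1-\psi}$, applying the pushforward $\ev_*$, and then rearranging. Since $\psi$ is nilpotent on each of the finite-dimensional spaces involved, $\frac{1}{1-\psi}=\sum_{k\ge0}\psi^{k}$ is a finite sum, so every expression below is an honest cohomology class. The whole argument is thus a formal manipulation governed by the projection formula together with one elementary identity in $\psi$; all the geometric content is already carried by Theorem \ref{thm:Gathmann}.

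First I would treat the left-hand side. Splitting the factor $(m\psi+\ev^{*}Y)$ after multiplying by $\frac{1}{1-\psi}$ produces two contributions. For the piece coming from $\ev^{*}Y$, the projection formula gives
\[
\ev_*\!\left(\frac{\ev^{*}Y}{1-\psi}\,[\Mbar_{(m)}^{Y}(X,\beta)]^{virt}\right)=Y\cdot\ev_*\!\left(\frac{1}{1-\psi}\,[\Mbar_{(m)}^{Y}(X,\beta)]^{virt}\right)=Y\cdot I_{\beta,(m)}.
\]
For the piece coming from $m\psi$, I would use the identity $\frac{m\psi}{1-\psi}=\frac{m}{1-\psi}-m$, which yields
\[
\ev_*\!\left(\frac{m\psi}{1-\psi}\,[\Mbar_{(m)}^{Y}(X,\beta)]^{virt}\right)=m\cdot I_{\beta,(m)}-m\cdot\ev_*[\Mbar_{(m)}^{Y}(X,\beta)]^{virt}.
\]
Adding the two contributions, the image of the left-hand side of (\ref{eq:Gathmann_little}) equals $(Y+m)\cdot I_{\beta,(m)}-m\cdot\ev_*[\Mbar_{(m)}^{Y}(X,\beta)]^{virt}$.

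Next I would apply the same operator to the right-hand side of (\ref{eq:Gathmann_little}). By the definition of $I_{\beta,(m+1)}$ the first summand contributes exactly $I_{\beta,(m+1)}$, while the second contributes $\ev_*\!\left(\frac{1}{1-\psi}[D_{(m)}^{Y}(X,\beta)]^{virt}\right)$. Equating the two sides and moving the term $m\cdot\ev_*[\Mbar_{(m)}^{Y}(X,\beta)]^{virt}$ across, the right-hand side becomes $I_{\beta,(m+1)}+\ev_*\!\left(\frac{1}{1-\psi}[D_{(m)}^{Y}(X,\beta)]^{virt}\right)+m\cdot\ev_*[\Mbar_{(m)}^{Y}(X,\beta)]^{virt}$, and the last two terms are precisely the definition of $J_{\beta,(m)}$. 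This is exactly (\ref{eq:IandJ}).

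There is no genuine geometric obstacle here, since the statement is a formal corollary of Theorem \ref{thm:Gathmann}; the only points demanding care are bookkeeping ones. One must check that the projection formula is applied to the correct class, and verify the algebraic identity $\frac{m\psi}{1-\psi}=\frac{m}{1-\psi}-m$ that produces the extra term $-m\cdot\ev_*[\Mbar_{(m)}^{Y}(X,\beta)]^{virt}$. It is this single term that gets reabsorbed into $J_{\beta,(m)}$ and that explains why $J_{\beta,(m)}$ is defined with the apparently ad hoc summand $m\cdot\ev_*[\Mbar_{(m)}^{Y}(X,\beta)]^{virt}$ rather than just the pushforward of the correction cycle.
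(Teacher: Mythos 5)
Your proof is correct and is essentially the paper's own route: the paper states this lemma as an immediate consequence of Theorem \ref{thm:Gathmann} (deferring to Gathmann's Lemma 1.2), and that consequence is obtained exactly by your manipulation, namely applying $\ev_*\bigl(\frac{1}{1-\psi}\cdot-\bigr)$ to (\ref{eq:Gathmann_little}), using the projection formula on the $\ev^{*}Y$ term and the identity $\frac{m\psi}{1-\psi}=\frac{m}{1-\psi}-m$ on the $m\psi$ term. Your closing observation is also the right one: the leftover term $m\cdot\ev_*[\Mbar_{(m)}^{Y}(X,\beta)]^{virt}$ is precisely what the summand $m\cdot\ev_*[\Mbar_{(m)}^{Y}(X,\beta)]^{virt}$ in Definition \ref{def:I(gammapsi)} of $J_{\beta,(m)}$ is designed to absorb.
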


The number $m$ in $Y+m$ is to be taken as $m\cdot1_{X}$.

A construction very similar to Gathmann's was used \textit{ante litteram}
by Kock for counting bitangents of a plane curve. See \cite{Kock,MR2313122}
for more details.

\section{Osculating Curves}

\label{sec:Osculating-curves}We denote by $C_{\beta}$ the constant
\[
C_{\beta}:=c_{1}(X)\cdot\beta-2.
\]
This constant has the property that the virtual fundamental class
of $\overline{M}_{(C_{\beta}+1)}^{Y}(X,\beta)$ has the same dimension
of $Y$.

When $X$ is homogeneous, there is a smooth dense open subspace $M_{0,1}(X,\beta)$
in $\overline{M}_{0,1}(X,\beta)$ whose points are irreducible stable
maps with no non-trivial automorphisms \cite[Lemma 13\&Theorem 2]{MR1492534}.
We denote by $M_{0,1}(X,\beta)^{*}$ the (possibly empty) open subspace
of $M_{0,1}(X,\beta)$ whose points are birational maps. The following
proposition clarifies the enumerative meaning of $[\overline{M}_{(C_{\beta}+1)}^{Y}(X,\beta)]^{virt}$.
\begin{prop}
\label{lem:Enumerative}Let $X$ be a homogeneous variety, and let
$Y\subset X$ be a general hypersurface which does not contain rational
curves. Let $T$ be any $\Q$-cohomological class, of codimension
$\dim X-1$, such that $T\cdot Y=\mathrm{pt}$. Then the number of
osculating curves at a general point of $Y$ is $I_{\beta,(C_{\beta}+1)}(T)$.
\end{prop}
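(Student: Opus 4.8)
The plan is to read $I_{\beta,(C_{\beta}+1)}(T)$ as the degree of an evaluation map over a general point of $Y$, and then to identify that degree with the geometric count $\OC(\beta,X)$. First I would settle the numerology. By Definition \ref{def:I(gammapsi)} the exponent of $\psi$ forced in $I_{\beta,(C_{\beta}+1)}(T)$ is $j=\mathrm{vdim}\,\M 1(X,\beta)-(C_{\beta}+1)-\mathrm{codim}\,T=0$, so $I_{\beta,(C_{\beta}+1)}(T)=\ev^{*}(T)\cdot[\Mbar_{(C_{\beta}+1)}^{Y}(X,\beta)]^{virt}$. The class $[\Mbar_{(C_{\beta}+1)}^{Y}(X,\beta)]^{virt}$ has dimension $\dim X-1=\dim Y$, and since $C_{\beta}+1>0$ the evaluation map factors through $Y$. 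By the projection formula the invariant equals the degree of $T\cdot\ev_{*}[\Mbar_{(C_{\beta}+1)}^{Y}(X,\beta)]^{virt}$; as this pushforward is a class of dimension $\dim Y$ supported on the irreducible variety $Y$, it is $N\cdot[Y]$ for a non-negative integer $N$ (and is $0$ precisely when $\ev$ does not dominate $Y$, in which case both $\OC(\beta,X)$ and the invariant vanish). Because $T\cdot Y=\mathrm{pt}$, I conclude $I_{\beta,(C_{\beta}+1)}(T)=N$, the number, counted with virtual multiplicity, of points of $\ev^{-1}(y)$ for a general $y\in Y$.

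Next I would show that over a general $y$ only honest osculating curves occur. Since $X$ is homogeneous, $G$ acts transitively and a Kleiman--Bertini argument applies: the loci of $\Mbar_{(C_{\beta}+1)}^{Y}(X,\beta)$ whose generic map is reducible, a multiple cover, or carries its marked point at a node have $\ev$-image of dimension strictly smaller than $\dim Y$, hence miss a general $y$. The hypothesis that $Y$ contains no rational curves is decisive here: a non-contracted component of the source mapping into $Y$ would be a rational curve inside $Y$, which cannot exist. In the notation of Theorem \ref{thm:Gathmann} this forces $\beta^{(0)}=0$ on any boundary piece $D^{Y}(X,B,M)$, and a dimension count then shows that every such piece also has $\ev$-image of dimension $<\dim Y$. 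Thus the only maps above a general $y$ lie in the open locus $M_{0,1}(X,\beta)^{*}$ of irreducible birational maps with trivial automorphisms, whose source is a single $\mathbb{P}^{1}$ mapping birationally onto an irreducible rational curve $C$ of class $\beta$ not contained in $Y$.

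I would then verify that the virtual class agrees with the ordinary fundamental class along this locus, so that each surviving point is counted once. On $M_{0,1}(X,\beta)^{*}$ the defining condition of $\Mbar_{(C_{\beta}+1)}^{Y}(X,\beta)$ is that $f^{*}Y-(C_{\beta}+1)\,p$ be effective; for a general very ample $Y$ the $C_{\beta}+1$ successive tangency conditions are transverse and cut out a smooth subscheme of the expected dimension $\dim Y$, on which the virtual class is reduced. Equivalently, since no boundary survives over general $y$, Fact \ref{fact:No_excess} computes the class as the top Chern class of the jet bundle $\mathcal{P}^{C_{\beta}}(Y)$, which is transverse, hence reduced, for general $Y$. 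Therefore the points of $\ev^{-1}(y)$ are reduced and each contributes multiplicity one.

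Finally I would match the count with the definition. A reduced point of $\ev^{-1}(y)$ is exactly a birational parametrization of an irreducible rational curve $C\not\subset Y$ of class $\beta$ meeting $Y$ at the general point $y$ with contact order at least $C_{\beta}+1=c_{1}(X)\cdot\beta-1$, i.e.\ an osculating curve through $y$; conversely every osculating curve through $y$ arises this way, and triviality of automorphisms on $M_{0,1}(X,\beta)^{*}$ makes the correspondence bijective. Hence $N=\OC(\beta,X)$ and $I_{\beta,(C_{\beta}+1)}(T)=\OC(\beta,X)$. The step I expect to be the main obstacle is the transversality and dimension analysis of the second and third paragraphs: showing that the reducible, multiple-cover and $Y$-contained loci together with the Gathmann boundary all have evaluation image of dimension $<\dim Y$, and that the surviving open locus carries the reduced, expected-dimensional structure. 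This is precisely where homogeneity (for general position) and the absence of rational curves on $Y$ are indispensable.
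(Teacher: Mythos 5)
Your overall architecture is the same as the paper's: interpret $I_{\beta,(C_{\beta}+1)}(T)$ as a count over a general point of $Y$ via the evaluation $\widetilde{\ev}$ to $Y$, discard all degenerate loci by showing their $\widetilde{\ev}$-images have dimension strictly less than $\dim Y$, and check that the remaining locus of birational maps is of expected dimension so each osculating curve counts once. But the decisive step --- eliminating multiple covers --- is exactly where your argument has a gap. You dispose of the reducible and multiple-cover loci by ``a Kleiman--Bertini argument,'' and that tool cannot do the job here: Kleiman's theorem concerns generic translates of a subvariety under the group action, whereas the defining condition of $\overline{M}_{(C_{\beta}+1)}^{Y}(X,\beta)$ is a high-order jet/tangency condition with respect to the \emph{fixed} hypersurface $Y$ (and translating $Y$ changes the moduli space itself); homogeneity of $X$ gives no transitivity on jets. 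More fundamentally, no transversality statement can show the multiple-cover locus is small \emph{inside} $\overline{M}_{(C_{\beta}+1)}^{Y}(X,\beta)$, because it genuinely occurs with excess dimension: in Example \ref{exa:osculatingOfPlaneCurve} the double covers of tangent lines branched at flexes of $Y$ form extra components. What is true is only that the \emph{image} of such components under $\widetilde{\ev}$ is small, and the proof of that is arithmetic, not transversality: writing $f=h\circ g$ with $g$ a degree-$k\ge2$ cover and $h$ birational of class $\beta'$ (so $k\beta'=\beta$), the contact order of $f$ with $Y$ at $p$ is at most $km$, where $m$ is the contact order of $h$ at $g(p)$; demanding $km\ge C_{\beta}+1=k\,c_{1}(X)\cdot\beta'-1$ forces $m\ge c_{1}(X)\cdot\beta'=C_{\beta'}+2$, and the locus of birational curves of class $\beta'$ with contact order $C_{\beta'}+2$ along $Y$ (the zero locus of a general section of $\mathcal{P}^{C_{\beta'}+1}(Y)$, of rank $C_{\beta'}+2$) has dimension $\dim X-2<\dim Y$, hence misses a general $y\in Y$. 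Without this computation your second paragraph is an assertion, not a proof.

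A secondary error: you invoke Fact \ref{fact:No_excess} to identify the virtual class with $c_{C_{\beta}+1}(\mathcal{P}^{C_{\beta}}(Y))$, but its hypothesis is that $D_{(m)}^{Y}(X,\beta)=0$ for \emph{all} $0\le m\le C_{\beta}$, globally --- not merely that the boundary misses a general fiber of $\widetilde{\ev}$. That hypothesis fails here: only the pieces with $\beta^{(0)}\neq0$ vanish by the no-rational-curves assumption, while the pieces with $\beta^{(0)}=0$, $r\ge2$ are generally non-empty and are precisely what produces the correction terms in the recursion (\ref{eq:abx}). The paper's route is local over a general $y$: after capping with $\widetilde{\ev}^{*}(\mathrm{pt})$, the contributions of every component other than the closure $\overline{M}^{*}$ of the birational locus vanish by the image-dimension counts above, and on $\overline{M}^{*}$, which has the expected dimension, the virtual class agrees with the ordinary fundamental class, so the invariant equals the degree of $\widetilde{\ev}_{|\overline{M}^{*}}$, i.e.\ $\OC(\beta,X)$. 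Your first and last paragraphs are consistent with this; it is the core exclusion argument that needs to be supplied.
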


\begin{proof}
Since we have just one marked point, there are no different labeling
of the marked points that give the same osculating curve. 

Let $s\in\Gamma(X,\mathcal{O}_{X}(Y))$ be the global section defining
$Y$. It defines a global section $\partial(s)$ of the jet bundle
$\mathcal{P}^{C_{\beta}}(Y)$. We know that $\overline{M}_{0,1}(X,\beta)$
is irreducible of the expected dimension (Section \ref{sec:Kontsevich-moduli-space}).
The osculating curves are parameterized by those stable maps in $M_{0,1}(X,\beta)^{*}$
at which the section $\partial(s)$ vanishes. The rank of $\mathcal{P}^{C_{\beta}}(Y)$
is $C_{\beta}+1$. By generality of $Y$, and by the hypothesis that
$Y$ has no rational curves, the locus of osculating curves in $M_{0,1}(X,\beta)^{*}$
has codimension $C_{\beta}+1$, which means that it has dimension
$\dim Y$. This locus is contained in $\overline{M}_{(C_{\beta}+1)}^{Y}(X,\beta)$
by Definition \ref{def:MY}. Let $i:Y\ra X$ be the inclusion. By
construction of $\overline{M}_{(C_{\beta}+1)}^{Y}(X,\beta)$, there
is a map $\widetilde{\ev}:\overline{M}_{(C_{\beta}+1)}^{Y}(X,\beta)\ra Y$
which makes the following diagram commutative. 
\[
\xymatrix{\overline{M}_{(C_{\beta}+1)}^{Y}(X,\beta)\ar[r]^{\,\,\,\,\,\,\,\,\,\,\,\,\,\,\,\,\widetilde{\ev}}\ar[dr]^{\ev} & Y\ar[d]^{i}\\
 & X
}
\]
That is, $\widetilde{\ev}$ sends each curve $(C,p,f)$ to the point
of tangency $f(p)\in Y$. If $M_{0,1}(X,\beta)^{*}\neq\emptyset$,
then the moduli space $\overline{M}_{(C_{\beta}+1)}^{Y}(X,\beta)$
has a component $\overline{M}^{*}$ of the expected dimension $\dim Y$,
where each general point represents a stable map in $M_{0,1}(X,\beta)^{*}$
whose image is an osculating curve. We may have another component,
$\overline{M}^{c}$, whose points parameterizes maps $(\mathbb{P}^{1},p,f)$
where $f$ is a not generically injective map. In this case, we can
find a decomposition $f:\mathbb{P}^{1}\overset{g}{\ra}\mathbb{P}^{1}\overset{h}{\ra}X$
with $g:\mathbb{P}^{1}\ra\mathbb{P}^{1}$ a finite cover, and $h:\mathbb{P}^{1}\ra X$
generically injective ($h$ is the normalization of the curve $f(\mathbb{P}^{1})$).

We want to prove that $\widetilde{\ev}(\overline{M}^{c})$ has dimension
strictly less that $\dim Y$. Let us fix a map $(\mathbb{P}^{1},p,f)\in\overline{M}^{c}$
where $g$ is a cover of degree $k\ge2$, and let $m$ be the multiplicity
of intersection of $h$ with $Y$ at $g(p)$. If we denote by $\beta'$
the class $h_{*}[\mathbb{P}^{1}]$, we clearly have $k\beta'=\beta$.
The contact order of $f$ and $Y$ at $p$ is at most $km$, depending
on the degree of ramification of $p$. If we want that $km$ be at
least $C_{\beta}+1=c_{1}(X)\cdot\beta-1$, then clearly $m\ge c_{1}(X)\cdot\beta'=C_{\beta'}+2$.
This implies that $(\mathbb{P}^{1},g(p),h)$ is in $M_{0,1}(X,\beta')^{*}$
and kills a general section of $\mathcal{P}^{C_{\beta'}+1}(Y)$. The
dimension of the zero set of that general section is
\[
\dim X-2+c_{1}(X)\cdot\beta'-(C_{\beta'}+2)=\dim X-2.
\]
This dimension is strictly less than $\dim Y$. Hence there is no
rational irreducible curve of class $\beta'$ through a general point
of $Y$ with multiplicity $C_{\beta'}+2$ at that point. Since $\widetilde{\ev}(\mathbb{P}^{1},p,f)=h(g(p))$,
we deduce that $\widetilde{\ev}(\overline{M}^{c})$ has dimension
strictly less than $\dim Y$, as claimed. So, for a general point
$y\in Y$, the inverse image $\widetilde{\ev}^{-1}(y)$ is supported
on $\overline{M}^{*}$, and it is possibly empty.

This implies by projection formula that every cycle $\tau\in H_{\dim Y}(\overline{M}^{c},\Z)$
is contracted by $\widetilde{\ev}$. So the contribution to $[\overline{M}_{(C_{\beta}+1)}^{Y}(X,\beta)]^{virt}\in H_{\dim Y}(\overline{M}_{(C_{\beta}+1)}^{Y}(X,\beta),\Z)$
from $\overline{M}^{c}$ does not intersect $\widetilde{\ev}^{*}(\mathrm{pt})$.
Therefore
\begin{equation}
\widetilde{\ev}^{*}(\mathrm{pt})\cdot\left[\overline{M}_{(C_{\beta}+1)}^{Y}(X,\beta)\right]^{virt}=\widetilde{\ev}^{*}(\mathrm{pt})\cdot\left[\overline{M}^{*}\cup\overline{M}^{c}\right]^{virt}=\widetilde{\ev}^{*}(\mathrm{pt})\cdot\left[\overline{M}^{*}\right]^{virt}.\label{eq:ev(T)Mstar2}
\end{equation}
But $\overline{M}^{*}$ has the expected dimension, so its virtual
fundamental class coincides with the usual fundamental class. Let
$T$ be the $\Q$-cohomology class of the statement. Using (\ref{eq:ev(T)Mstar2})
we get
\begin{eqnarray*}
\ev^{*}(T)\cdot\left[\overline{M}_{(C_{\beta}+1)}^{Y}(X,\beta)\right]^{virt} & = & \widetilde{\ev}^{*}(i^{*}(T))\cdot\left[\overline{M}_{(C_{\beta}+1)}^{Y}(X,\beta)\right]^{virt}\\
 & = & \widetilde{\ev}^{*}(\mathrm{pt})\cdot\left[\overline{M}_{(C_{\beta}+1)}^{Y}(X,\beta)\right]^{virt}\\
 & = & \widetilde{\ev}^{*}(\mathrm{pt})\cdot\left[\overline{M}^{*}\right].
\end{eqnarray*}
We deduce that if $\overline{M}^{*}=\emptyset$, then $I_{\beta,(C_{\beta}+1)}(T)=0$.
If $\overline{M}^{*}\neq\emptyset$, then $I_{\beta,(C_{\beta}+1)}(T)$
is equal to the degree of the map $\widetilde{\ev}_{|\overline{M}^{*}}:\overline{M}^{*}\ra Y$,
i.e., to the number of osculating curves through a general point of
$Y$. 
\end{proof}
\begin{example}
\label{exa:osculatingOfPlaneCurve}Take $X=\mathbb{P}^{2}$, $Y$
a general curve of degree $d>2$ and $\beta$ the class of a conic,
so that $C_{\beta}+1=5$. It is clear that we have just one osculating
conic at every point of $Y$. Because if we had two, then every curve
in the linear system that they span would be an osculating conic.
Let $l$ be the tangent line at a general point $y\in Y$. A double
cover of $l$ branched at $y$ will have multiplicity $4$, hence
it is not osculating. If we take $y$ to be a flex point, then a double
cover of $l$ will have multiplicity $6$, so it is osculating at
$y$. But the flex points are not dense in $Y$. This implies that
$\overline{M}_{(C_{\beta}+1)}^{Y}(X,\beta)$ has the following components:
$\overline{M}^{*}$ which is mapped isomorphically to $Y$ by $\widetilde{\ev}$,
and a $1$-dimensional irreducible component for each flex point $y$.
Such a component parameterizes double covers of the tangent $l$ at
$y$, branched at $y$.
\end{example}

Let $d$ be a positive integer, and $Y'$ a general element in the
linear system $|\mathcal{O}_{X}(dY)|$. For every non zero effective
$1$-cycle $\gamma$ of $Y'$, by adjunction 
\[
-K_{Y'}\cdot\gamma=(-K_{X}-dY)_{|Y'}\cdot\gamma
\]
will be negative for some large $d$. Indeed, as $Y$ is very ample,
$Y_{|Y'}\cdot\gamma>0$ by Kleiman's Positivity Theorem \cite[Chapter 3, §1]{kleiman1966toward}.
This implies that 
\[
\mathrm{vdim}\Mbar_{0,1}(Y',\gamma)<0
\]
for $d\gg0$, i.e., $\Mbar_{0,1}(Y',\gamma)$ is virtually empty.
It can happen that the requested intersection multiplicity between
$Y$ and the osculating curve is so high that the curve must be contained
in $Y$ (take lines tangent to linear subspaces). In order to avoid
that, we could substitute $Y$ with $Y'$ for $d\gg0$. We will see
that $I_{\beta,(C_{\beta}+1)}(T)$ does not depend on $d$, as well
as on $Y$, as long as $Y$ has no rational curves. So, it makes sense
to omit $Y$ in $\OC(\beta,X)$.

\section{Recursive Formula}

\label{sec:Osculating-conics}In this section we will give a recursive
formula for $I_{\beta,(C_{\beta}+1)}(T)$. We use the same notation
as before. The variety $Y\subset X$ is a smooth very ample hypersurface
with no rational curves. We suppose that $1_{X},Y\in\{T^{i}\}$. We
denote by $T$ the dual of $Y$ in $\{T_{i}\}$. When $Y$ generates
$H^{2}(X,\Z)\otimes\Q$, the class $T$ is uniquely determined. The
number $I_{\beta,(C_{\beta}+1)}(T)$ can be described as the coefficient
of $Y$ in $I_{\beta,(C_{\beta}+1)}$, as seen in Definition \ref{def:I(gammapsi)}.

We apply Theorem \ref{thm:Gathmann} to compute $I_{\beta,(C_{\beta}+1)}(T)$.
Using Equation \ref{eq:IandJ} we have
\begin{eqnarray}
I_{\beta,(C_{\beta}+1)} & = & (Y+C_{\beta})I_{\beta,(C_{\beta})}-J_{\beta,(C_{\beta})}\nonumber \\
 & = & (Y+C_{\beta})(Y+C_{\beta}-1)I_{\beta,(C_{\beta}-1)}-(Y+C_{\beta})J_{\beta,(C_{\beta}-1)}-J_{\beta,(C_{\beta})}\nonumber \\
 & \vdots & \vdots\nonumber \\
 & = & \left(\prod_{i=0}^{C_{\beta}}(Y+i)\right)I_{\beta,(0)}-\sum_{i=0}^{C_{\beta}-1}\left(\prod_{j=i+1}^{C_{\beta}}(Y+j)\right)J_{\beta,(i)}-J_{\beta,(C_{\beta})}.\label{eq:Bigsum}
\end{eqnarray}
Let us compute the first term of this sum in Equation (\ref{eq:FirstTerm}).
In the next display, following \cite{gathmann2003relative}, $mod\,H^{3}$
means that we omit cohomology classes of codimension greater than
$2$. 
\begin{eqnarray}
\left(\prod_{i=0}^{C_{\beta}}(Y+i)\right)I_{\beta,(0)} & = & \left(\prod_{i=1}^{C_{\beta}}(Y+i)\right)YI_{\beta,(0)}\nonumber \\
 & = & \left(\prod_{i=1}^{C_{\beta}}(Y+i)\right)YI_{1,\beta}^{X}(\mathrm{pt})\,\,\,\,\,(mod\,H^{3})\nonumber \\
 & = & C_{\beta}!I_{1,\beta}^{X}(\mathrm{pt})\cdot Y\,\,\,\,\,(mod\,H^{3}).\label{eq:FirstTerm}
\end{eqnarray}
Back in (\ref{eq:Bigsum}), we need to find the contribution of each
$J_{\beta,(i)}$. Gathmann computed explicitly all the $J_{\beta,(i)}$
in \cite[Lemma 1.8]{gathmann2003relative} using that $-K_{Y}$ is
nef. Since we want that $Y$ has no rational curves, we need that
$-K_{Y}\cdot\gamma$ is negative for every rational curve $\gamma$
in $Y$, as explained at the end of Section \ref{sec:Osculating-curves}.
So $-K_{Y}$ is not nef.
\begin{rem}
\label{rem:H0andH2part}For each $0\le m\le C_{\beta}-1$, dimensional
reasons ensures that the class $\ev_{*}[\Mbar_{(m)}(X,\beta)]^{virt}$
has trivial $H^{0}$ and $H^{2}$ part. Moreover $\ev_{*}[\Mbar_{(C_{\beta})}(X,\beta)]^{virt}$
has trivial $H^{2}$ part.
\end{rem}

Let us compute the contribution of $\ev_{*}\left(\frac{1}{1-\psi}[D_{(m)}^{Y}(X,\beta)]^{virt}\right)$.

Let $D:=D^{Y}(X,B,M)$ be one of the individual term as in Theorem
\ref{thm:Gathmann}, with $B=(\beta^{(0)},...,\beta^{(r)})$ and $M=(m^{(1)},...,m^{(r)})$,
$m^{(i)}>0$. If $\beta^{(0)}\neq0$, then $\Mbar_{0,1}(Y,\beta^{(0)})$
is empty by our hypothesis that $Y$ has no rational curves. So that
$D$ has no contribution. Let $\beta^{(0)}=0$, in particular we have
the following conditions on $D$:
\begin{eqnarray*}
\sum_{i=1}^{r}\beta^{(i)}=\beta &  & \mathrm{(degree\,condition)}\\
\sum_{i=1}^{r}m^{(i)}=m &  & \mathrm{(multiplicity\,condition)}\\
r\ge2. &  & \mathrm{(stability\,condition)}.
\end{eqnarray*}
The value of $\ev_{*}\left(\frac{1}{1-\psi}[D]^{virt}\right)$ is
given by the formula \cite[Remark 1.4,Eq.(2)]{gathmann2003relative}
\begin{equation}
\sum I_{0}^{Y}(T^{i}\psi^{j}\otimes\gamma_{1}\otimes\cdots\otimes\gamma_{r})\cdot\frac{1}{r!}\prod_{k=1}^{r}\left(m^{(k)}\cdot I_{\beta^{(k)},(m^{(k)})}(\gamma_{k}^{\vee})\right)\cdot T_{i},\label{eq:Contribuition}
\end{equation}
where the $\gamma_{k}$ run in a basis of the part of $H^{*}(Y)\otimes\Q$
induced by $X$ \cite[Remark 5.4]{gathmann2002absolute}, and $\gamma_{k}^{\vee}$
is the dual as a $\Q$-class in $X$. By Lefschetz Hyperplane Theorem,
we can take such a basis as $\{T_{|Y}^{i}\}$. We will look for the
conditions on $D$ such that this contribution is non zero.
\begin{lem}
\label{lem:The-multiple-of}The coefficient of $Y$ in $\ev_{*}\left(\frac{1}{1-\psi}[D]^{virt}\right)$
is
\[
\frac{1}{r!}\prod_{k=1}^{r}\left((C_{\beta^{(k)}}+1)I_{\beta^{(k)},(C_{\beta^{(k)}}+1)}(T)\right)
\]
if $m=C_{\beta}+2-r$, and zero otherwise.
\end{lem}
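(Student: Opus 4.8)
The plan is to evaluate one summand $D=D^Y(X,B,M)$ of (\ref{eq:Contribuition}) and isolate its $Y$-component, keeping in mind that the reduction to $\beta^{(0)}=0$ has already been made. Since $T$ is dual to $Y$, the coefficient of $Y$ in a class $c\in H^*(X,\Z)$ is $\int_X c\cdot T$; applied to (\ref{eq:Contribuition}), and using the identity $\sum_i(\int_X T_i\cdot T)\,T^i=T$, the weighted sum $\sum_i I_0^Y(T^i\psi^j\otimes\cdots)\,T_i$ collapses into the single term $I_0^Y(T\psi^j\otimes\gamma_1\otimes\cdots\otimes\gamma_r)$. Thus the coefficient of $Y$ equals
\[
\sum_{j,\gamma}I_0^Y(T\psi^j\otimes\gamma_1\otimes\cdots\otimes\gamma_r)\cdot\frac{1}{r!}\prod_{k=1}^r m^{(k)}I_{\beta^{(k)},(m^{(k)})}(\gamma_k^\vee).
\]

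First I would compute the genus-zero, class-zero factor. As $\beta^{(0)}=0$, it lives on $\M{1+r}(Y,0)\cong\M{1+r}\times Y$; a constant genus-zero map is unobstructed, so the virtual class is the fundamental one and every evaluation map is the projection to $Y$. The $\psi$ on the distinguished marked point (the future contact point $p$) is pulled back from $\M{1+r}$, so the invariant factors as
\[
I_0^Y(T\psi^j\otimes\gamma_1\otimes\cdots\otimes\gamma_r)=\left(\int_{\M{1+r}}\psi^j\right)\left(\int_Y (T|_Y)\cdot\gamma_1\cdots\gamma_r\right).
\]
The first integral is the classical $\int_{\M{1+r}}\psi^j$, equal to $1$ if $j=r-2$ and $0$ otherwise. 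On the other hand the surviving power of $\frac{1}{1-\psi}$ is governed by dimension: since $\ev^*T$ has codimension $\dim X-1$ and $[D]^{virt}$ has dimension $\dim X-1+C_\beta-m$, only $j=C_\beta-m$ contributes. Matching $j=r-2$ with $j=C_\beta-m$ gives $m=C_\beta+2-r$; when $m$ is not of this form the two constraints are incompatible and the coefficient vanishes, which is the second alternative of the statement.

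The step I expect to be the crux is the bookkeeping of duals in the second integral. Because $T$ has codimension $\dim X-1$, its restriction $T|_Y$ is the top class $\mathrm{pt}_Y$ of $Y$ (indeed $\int_Y T|_Y=\int_X T\cdot Y=1$), so $\int_Y(T|_Y)\gamma_1\cdots\gamma_r$ is nonzero exactly when $\gamma_1\cdots\gamma_r$ has codimension $0$, forcing $\gamma_k=1_Y$ for every $k$ and giving the value $1$. The decisive point is then that the Poincar\'e dual of $1_Y$ on $Y$ is $\mathrm{pt}_Y$, whose expression as a $\Q$-class on $X$ is precisely $T$ and not the point class of $X$; hence $\gamma_k^\vee=T$, and each sub-factor becomes the osculating-type invariant $I_{\beta^{(k)},(m^{(k)})}(T)$. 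This identification is exactly what lets the recursion reproduce the same invariant for the smaller classes $\beta^{(k)}$.

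Finally I would pin down the multiplicities. By Definition \ref{def:I(gammapsi)} the invariant $I_{\beta^{(k)},(m^{(k)})}(T)$ carries the descendant power $\psi^{(C_{\beta^{(k)}}+1)-m^{(k)}}$, hence vanishes unless $m^{(k)}\le C_{\beta^{(k)}}+1$. Combining the multiplicity condition $\sum_k m^{(k)}=m$ of Theorem \ref{thm:Gathmann} with $m=C_\beta+2-r=\sum_k(C_{\beta^{(k)}}+1)$ shows that the sum of the $m^{(k)}$ equals the sum of their upper bounds, so the only nonvanishing term has $m^{(k)}=C_{\beta^{(k)}}+1$ for all $k$ (and then $\psi^0$). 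Substituting $j=r-2$, $\gamma_k=1_Y$ and $m^{(k)}=C_{\beta^{(k)}}+1$ leaves exactly
\[
\frac{1}{r!}\prod_{k=1}^r\left((C_{\beta^{(k)}}+1)\,I_{\beta^{(k)},(C_{\beta^{(k)}}+1)}(T)\right),
\]
which is the asserted value.
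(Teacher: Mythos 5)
Your proof is correct and follows essentially the same route as the paper's: you extract the coefficient of $Y$ by pairing with $T$, reduce the class-zero factor on $\overline{M}_{0,1+r}(Y,0)\cong\overline{M}_{0,1+r}\times Y$ to force $\gamma_{k}=1_{Y}$ and $j=r-2$ (hence $m=C_{\beta}+2-r$), and then pin down $m^{(k)}=C_{\beta^{(k)}}+1$ by comparing the multiplicity condition $\sum_{k}m^{(k)}=C_{\beta}+2-r$ with the vanishing bound $m^{(k)}\le C_{\beta^{(k)}}+1$, exactly as in the paper. The only differences are cosmetic: the paper evaluates the class-zero invariant by applying the string equation $(r-2)$ times to reduce to $\overline{M}_{0,3}\times Y$, whereas you invoke the standard $\int_{\overline{M}_{0,1+r}}\psi^{r-2}=1$ directly, and your dual-basis bookkeeping via bilinearity is a slightly more careful version of the paper's step of setting $T^{i}=T$.
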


\begin{proof}
The coefficient of $Y$ is given by the sum in Equation (\ref{eq:Contribuition}),
when $T^{i}=T$, $\gamma_{k}\in\{T_{|Y}^{i}\}$ for $1\le k\le r$,
and
\begin{eqnarray*}
j & = & \dim[D]^{virt}-\mathrm{codim}(T)\\
 & = & \mathrm{vdim}\Mbar_{(m+1)}(X,\beta)-(\dim X-1)\\
 & = & \dim X+c_{1}(X)\cdot\beta-2-(m+1)-\dim X+1\\
 & = & C_{\beta}-m.
\end{eqnarray*}
By Definition \ref{def:Relative}, we know that
\begin{equation}
I_{0}^{Y}(T\psi^{j}\otimes\gamma_{1}\otimes\cdots\otimes\gamma_{r})=\ev_{1}^{*}(T)\cdot\psi^{j}\cdot\widetilde{\ev}_{2}^{*}(\gamma_{1})\cdot...\cdot\widetilde{\ev}_{r+1}^{*}(\gamma_{r})\cdot[\overline{M}_{0,1+r}(Y,0)]^{virt}.\label{eq:IY0}
\end{equation}
It is well known that $\overline{M}_{0,1+r}(Y,0)\cong\overline{M}_{0,1+r}\times Y$,
and each map $\widetilde{\ev}_{i}$ is the second projection. Moreover,
$\ev_{1}^{*}(T)=\widetilde{\ev}_{i}(\mathrm{pt})$. So for every $i$,
\[
\ev_{1}^{*}(T)\cdot\widetilde{\ev}_{i}^{*}(1_{X|Y})=\widetilde{\ev}_{i}(\mathrm{pt}\cdot1_{Y})=\widetilde{\ev}_{i}(\mathrm{pt}).
\]
If one of the $\gamma_{i}$ is not $1_{X|Y}=1_{Y}$, then $\ev_{1}^{*}(T)\cdot\widetilde{\ev}_{i+1}^{*}(\gamma_{i})=0$
for dimensional reasons. This implies that the expression (\ref{eq:IY0})
is zero if $\gamma_{k}\neq1_{Y}$ for some $1\le k\le r$.

Now we want to compute $I_{0}^{Y}(T\psi^{j}\otimes1_{Y}\otimes\cdots\otimes1_{Y})$,
where $1_{Y}$ appears $r$ times. Using $(r-2)$-times the string
equation \cite[1.2.I]{SB_1997-1998__40__307_0}, we get
\begin{eqnarray*}
I_{0}^{Y}(T\psi^{j}\otimes1_{Y}^{\otimes r}) & = & I_{0}^{Y}(T\psi^{j-1}\otimes1_{Y}^{\otimes r-1})\\
 & \vdots & \vdots\\
 & = & I_{0}^{Y}(T\psi^{j-(r-2)}\otimes1_{Y}\otimes1_{Y}).
\end{eqnarray*}
Hence,
\[
I_{0}^{Y}(T\psi^{j-(r-2)}\otimes1_{Y}\otimes1_{Y})=T_{|Y}\cdot1_{Y}\cdot1_{Y}\cdot\psi^{j-(r-2)}\cdot\left[\overline{M}_{0,3}\times Y\right].
\]
This expression is $1$ if $j-(r-2)=0$, and $0$ otherwise. We proved
that the coefficient of $Y$ is zero if $C_{\beta}-m-(r-2)\neq0$.
So, necessarily $m=C_{\beta}+2-r$, as asserted.

We need to determine the value of $m^{(k)}$ in $m^{(k)}I_{\beta^{(k)},(m^{(k)})}(\gamma_{k}^{\vee})$.
First of all, since $\gamma_{k}=1_{Y}$, then $\gamma_{k}^{\vee}=T$,
hence 
\[
m^{(k)}I_{\beta^{(k)},(m^{(k)})}(\gamma_{k}^{\vee})=m^{(k)}I_{\beta^{(k)},(m^{(k)})}(T).
\]
Consider
\begin{equation}
I_{\beta^{(k)},(m^{(k)})}(T)=\ev^{*}(T)\cdot\psi^{s}\cdot[\Mbar_{(m^{(k)})}(X,\beta^{(k)})]^{virt},\label{eq:I}
\end{equation}
where
\begin{eqnarray*}
s & = & \dim[\Mbar_{(m^{(k)})}(X,\beta^{(k)})]^{virt}-\mathrm{codim}(T)\\
 & = & C_{\beta^{(k)}}-m^{(k)}+1.
\end{eqnarray*}
If we want (\ref{eq:I}) to be non zero, $s\ge0$ so that each term
$m^{(k)}$ must be at most $C_{\beta^{(k)}}+1$. This forces each
$m^{(k)}$ to be exactly $C_{\beta^{(k)}}+1$, indeed 
\begin{eqnarray*}
\sum_{i=1}^{r}(C_{\beta^{(i)}}+1) & = & \sum_{i=1}^{r}\left(c_{1}(X)\cdot\beta^{(i)}-1\right)\\
 & = & \left(\sum_{i=1}^{r}c_{1}(X)\cdot\beta^{(i)}\right)-r\\
 & = & C_{\beta}+2-r.\\
 & = & \sum_{i=1}^{r}m^{(i)}.
\end{eqnarray*}
Finally, the coefficient of $Y$ in $\ev_{*}\left(\frac{1}{1-\psi}[D]^{virt}\right)$
is
\[
\frac{1}{r!}\prod_{k=1}^{r}\left((C_{\beta^{(k)}}+1)I_{\beta^{(k)},(C_{\beta^{(k)}}+1)}(T)\right).
\]
\end{proof}
Let us go back to Equation (\ref{eq:Bigsum}). We are interested in
the coefficient of $Y$ in $J_{\beta,(C_{\beta})}$ and also in each
term 
\begin{equation}
\left(\prod_{j=i+1}^{C_{\beta}}(Y+j)\right)J_{\beta,(i)},\,\,\,i=0,...,C_{\beta}-1.\label{eq:Terms}
\end{equation}
For $J_{\beta,(C_{\beta})}$, by Lemma \ref{lem:The-multiple-of}
and Remark \ref{rem:H0andH2part} the required coefficient is
\[
\sum\frac{1}{2}\prod_{k=1}^{2}\left((C_{\beta^{(k)}}+1)I_{\beta^{(k)},(C_{\beta^{(k)}}+1)}(T)\right),
\]
where the sum runs over all the ordered partitions $(\beta^{(1)},\beta^{(2)})$
of $\beta$, with two non zero summands. The reason why we take ordered
partitions is the following. Since the marked points in $\M{1+r}(Y,0)$
are ordered, the two spaces
\[
\M 3(Y,0)\times_{Y}\Mbar_{(m^{(k)})}^{Y}(X,\beta^{(k)})\times_{Y}\Mbar_{(m^{(3-k)})}^{Y}(X,\beta^{(3-k)}),\,\,k\in\{1,2\},
\]
are isomorphic, but not the same if $\beta^{(1)}\neq\beta^{(2)}$.
So, we have to compute the contribution of each of them. For $(Y+C_{\beta})J_{\beta,(C_{\beta}-1)}$,
since $J_{\beta,(C_{\beta}-1)}$ has trivial $H^{0}$ coefficient
as noted in Remark \ref{rem:H0andH2part}, we can ignore the class
$Y$ in $(Y+C_{\beta})$. So the coefficient of $Y$ is
\[
C_{\beta}\sum\frac{1}{3!}\prod_{k=1}^{3}\left((C_{\beta^{(k)}}+1)I_{\beta^{(k)},(C_{\beta^{(k)}}+1)}(T)\right),
\]
where the sum runs over all the ordered partitions of $\beta$ with
three non zero summands. For any other term in (\ref{eq:Terms}) we
proceed in the same way. We get that the contribution of those terms
to (\ref{eq:Bigsum}) is the coefficient of $Y$ in $J_{\beta,(i)}$
times the number $\prod_{j=i+1}^{C_{\beta}}j=\frac{C_{\beta}!}{i!}$.
At the very end, we get that $I_{\beta,(C_{\beta}+1)}(T)$ is equal
to
\[
C_{\beta}!I_{1,\beta}^{X}(\mathrm{pt})-\sum_{K}\frac{C_{\beta}!}{(C_{\beta}+2-r_{K})!}\frac{1}{r_{K}!}\prod_{k=1}^{r_{K}}(C_{\beta^{(k)}}+1)I_{\beta^{(k)},(C_{\beta^{(k)}}+1)}(T),
\]
or, equivalently,
\begin{equation}
C_{\beta}!I_{1,\beta}^{X}(\mathrm{pt})-\sum_{K}\left(\begin{array}{c}
C_{\beta}\\
r_{K}-2
\end{array}\right)\frac{1}{r_{K}(r_{K}-1)}\prod_{k=1}^{r_{K}}(C_{\beta^{(k)}}+1)I_{\beta^{(k)},(C_{\beta^{(k)}}+1)}(T),\label{eq:abx}
\end{equation}
where the sum is taken among all the ordered partitions $K$ of $\beta$:
\[
K=(\beta^{(1)},...,\beta^{(r_{K})})\,\mathrm{such\,that}\,\sum_{k=1}^{r_{K}}\beta^{(k)}=\beta,\,\beta^{(k)}>0,r_{K}\ge2.
\]

\begin{rem}
Note that $I_{\beta,(C_{\beta}+1)}(T)$ does not depend on $Y$, but
only on $X$ and $\beta$. To prove that, we can use a simple induction
argument on the maximal length $max(\beta)$ of all the partitions
of $\beta$. If $max(\beta)=1$, i.e., $\beta$ is primitive, then
\[
I_{\beta,(C_{\beta}+1)}(T)=C_{\beta}!I_{1,\beta}^{X}(\mathrm{pt}).
\]
In the general case, $I_{\beta,(C_{\beta}+1)}(T)$ is a combination
of $I_{\beta^{(k)},(C_{\beta^{(k)}}+1)}(T)$ and other terms independent
of $Y$. But $I_{\beta^{(k)},(C_{\beta^{(k)}}+1)}(T)$ is independent
of $Y$ by induction, since clearly $max(\beta^{(k)})<max(\beta)$.
\end{rem}

\begin{example}
Let us give an example of a calculation using (\ref{eq:abx}). If
$X=\mathbb{P}^{3}$, then $I_{1,\beta}^{\mathbb{P}^{3}}(\mathrm{pt})=\frac{1}{(n!)^{4}}$
where $\beta=n[\mathrm{line}]$ by Example \ref{exa:Pand}. By a simple
calculation we see that $\OC(1,\mathbb{P}^{3})=2$. If $\beta$ is
the class of a conic, the unique partition is the sum of two lines,
so
\begin{eqnarray*}
\OC(2,\mathbb{P}^{3}) & = & \frac{C_{2}!}{2^{4}}-\frac{C_{2}!}{(C_{2}+2-2)!}\frac{1}{2}\prod_{k=1}^{2}(C_{1}+1)\OC(1,\mathbb{P}^{3})\\
 & = & 45-\frac{1}{2}\cdot3\cdot2\cdot3\cdot2\\
 & = & 27,
\end{eqnarray*}
as stated in Introduction.
\end{example}

\section{Applications}

\label{sec:Applications}In this section $X$ will be a homogeneous
variety, so by Proposition \ref{lem:Enumerative} $\OC(\beta,X)$
coincides with $I_{\beta,(C_{\beta}+1)}(T)$. To compute $\OC(\beta,X)$,
we need $I_{1,\beta}^{X}(\mathrm{pt})$. The opposite direction is
also possible: once we know $\OC(\beta,X)$ for some $\beta$, then
we can get $I_{1,\beta}^{X}(\mathrm{pt})$. For example, no point
of $\overline{M}_{0,1}(\mathbb{P}^{1},n)$ represents a birational
stable map if $n\ge2$, so by the proof of Proposition \ref{lem:Enumerative}
we expect $\OC(1,\mathbb{P}^{1})=1$ and $\OC(n,\mathbb{P}^{1})=0$
for $n\ge2$. Equation (\ref{eq:abx}) implies immediately $I_{1,1}^{\mathbb{P}^{1}}(\mathrm{pt})=1$,
whilst for $n\ge2$ the only non zero term of the sum
\[
\sum_{K}\frac{C_{\beta}!}{(C_{\beta}+2-r_{K})!}\frac{1}{r_{K}!}\prod_{i=1}^{r_{K}}(C_{\beta_{i}}+1)\OC(\beta_{i},\mathbb{P}^{1})
\]
appears when $K=(1,...,1)$. So the entire sum is equal to
\begin{eqnarray*}
\frac{C_{\beta}!}{(C_{\beta}+2-n)!}\frac{1}{n!}\prod_{i=1}^{n}(C_{1}+1)\OC(1,\mathbb{P}^{1}) & = & \frac{C_{\beta}!}{n!}\frac{1}{n!}1=\frac{C_{\beta}!}{(n!)^{2}}.
\end{eqnarray*}
Finally, the equation
\[
\OC(n,\mathbb{P}^{1})=C_{n}!I_{1,n}^{\mathbb{P}^{1}}(\mathrm{pt})-\frac{C_{n}!}{(n!)^{2}}
\]
implies $I_{1,n}^{\mathbb{P}^{1}}(\mathrm{pt})=\frac{1}{(n!)^{2}}$.

Using the same technique, we prove $I_{1,1}^{\mathbb{P}^{s}}(\mathrm{pt})=1$
for every $s\ge1$. Indeed, for $n=1$ (\ref{eq:abx}) reduces to
$\OC(1,\mathbb{P}^{s})=(s-1)!I_{1,1}^{\mathbb{P}^{s}}(\mathrm{pt})$.
So, it is enough to prove the following
\begin{prop}
$\OC(1,\mathbb{P}^{s})=(s-1)!$.
\end{prop}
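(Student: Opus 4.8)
The plan is to count the osculating lines directly and geometrically, invoking Proposition \ref{lem:Enumerative} to identify $\OC(1,\mathbb{P}^{s})$ with the number of lines attaining the prescribed contact order at a general point of $Y$. First I record the numerical data: for $X=\mathbb{P}^{s}$ and $\beta$ the class of a line one has $c_{1}(X)\cdot\beta=s+1$, so $C_{\beta}=s-1$ and the required contact order at the general point $y\in Y$ is $c_{1}(X)\cdot\beta-1=s$. Thus the task reduces to counting lines through $y$ meeting $Y$ to order at least $s$ at $y$.

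Next I would parameterize the lines through $y$ by their directions $v\in\mathbb{P}(T_{y}\mathbb{P}^{s})\cong\mathbb{P}^{s-1}$. Let $F$ be the degree-$d$ polynomial defining $Y$. Expanding the restriction of $F$ to the line $t\mapsto y+tv$ as a power series in $t$, the coefficient of $t^{k}$ is $\tfrac{1}{k!}D^{k}F(y)(v^{\otimes k})$, a form of degree $k$ in $v$. Since $y\in Y$ the term $k=0$ vanishes automatically, so contact order at least $s$ is equivalent to the simultaneous vanishing of the forms $G_{k}(v):=D^{k}F(y)(v^{\otimes k})$ for $k=1,\dots,s-1$, of degrees $1,2,\dots,s-1$ respectively. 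The linear equation $G_{1}=0$ cuts out the projectivized tangent hyperplane $\mathbb{P}(T_{y}Y)\cong\mathbb{P}^{s-2}$, and restricted to it I am left with the $s-2$ forms $G_{2},\dots,G_{s-1}$ of degrees $2,3,\dots,s-1$.

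I would then apply B\'ezout's theorem in $\mathbb{P}^{s-2}$: the expected number of common zeros is $2\cdot3\cdots(s-1)=(s-1)!$. To upgrade this to an exact count I must verify that the intersection is transverse and zero-dimensional, so that the $(s-1)!$ points are distinct and reduced, and that each corresponds to a genuine osculating line, i.e.\ an irreducible rational curve, not contained in $Y$, given by a birational stable map (a point of $M_{0,1}(\mathbb{P}^{s},\beta)^{*}$). Lines are automatically irreducible and rational, a general line is not contained in $Y$ (if necessary one replaces $Y$ by a high multiple $Y'\in|\mathcal{O}_{X}(dY)|$ as at the end of Section \ref{sec:Osculating-curves}), and a line is parameterized by a degree-one, hence birational, map, so these points are immediate.

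The main obstacle is the transversality/genericity claim. For a general hypersurface $Y$ and a general point $y\in Y$, the higher jets of $F$ at $y$ are unconstrained beyond $F(y)=0$, so the forms $G_{2},\dots,G_{s-1}$ restrict to general forms of their respective degrees on $\mathbb{P}(T_{y}Y)$; by the standard B\'ertini-type genericity of complete intersections their common zero locus is then a reduced, transverse, zero-dimensional scheme of exactly $(s-1)!$ points. Each such point is a distinct osculating line, so the enumerative content of Proposition \ref{lem:Enumerative} yields $\OC(1,\mathbb{P}^{s})=(s-1)!$. As sanity checks I would confirm the low-dimensional cases: $s=2$ gives the unique tangent line, and $s=3$ gives B\'ezout's $2\cdot\!$ count, recovering Salmon's two inflexional lines and the value $\OC(1,\mathbb{P}^{3})=2$ recorded earlier.
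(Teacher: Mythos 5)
Your argument is correct, but it takes a genuinely different route from the paper's. You fix the general point $y$ first and count inside $\mathbb{P}(T_{y}\mathbb{P}^{s})$: the contact conditions become the vanishing of the Taylor forms of degrees $1,\dots,s-1$, and B\'ezout on the tangent hyperplane gives $2\cdot3\cdots(s-1)=(s-1)!$. The paper never fixes $y$: it works on the whole moduli space $\overline{M}_{0,1}(\mathbb{P}^{s},1)\cong\mathbb{P}(\mathcal{E})$ over the Grassmannian $G(2,s+1)$, observes that all correction terms $D_{(m)}^{Y}(\mathbb{P}^{s},1)$ vanish so that, by Fact \ref{fact:No_excess}, the relevant virtual class is the top Chern class of the jet bundle $\mathcal{P}^{s-1}(Y)$, computes that class from the exact sequence (\ref{eq:jetbundle}), and evaluates the resulting zero-cycle by Schubert calculus, $(\sigma_{(1,0)})^{s-1}\cdot\sigma_{(s-1,0)}=\sigma_{(s-1,s-1)}=1$. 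Your route is more elementary and exhibits the $(s-1)!$ lines concretely, as the common zeros of forms of degrees $2,\dots,s-1$ on $\mathbb{P}(T_{y}Y)$, in the classical spirit of Salmon and Darboux; the paper's route stays inside the jet-bundle/Gromov--Witten formalism, so it directly computes the invariant $I_{\beta,(C_{\beta}+1)}(T)$ (the quantity that actually enters the recursion (\ref{eq:abx})) and outsources all transversality issues to Proposition \ref{lem:Enumerative}. The corresponding burden in your proof is the genericity step: the claim that the jets of $F$ at a general point of a general $Y$ are ``unconstrained'' needs (i) surjectivity of the jet evaluation $H^{0}(\mathcal{O}_{\mathbb{P}^{s}}(d))\to\mathcal{O}_{\mathbb{P}^{s}}/\mathfrak{m}_{y}^{s}$, which holds for $d\ge s-1$, and (ii) the standard incidence-variety argument to pass from ``general $Y$ through a fixed $y$'' to ``general point of a general $Y$''. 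Since you flag this point and both ingredients are standard, this is a matter of detail rather than a gap.
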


\begin{proof}
Let $Y$ be a hypersurface of degree $d\gg0$. All spaces $D_{(m)}^{Y}(\mathbb{P}^{s},1)$
are empty. So by Fact \ref{fact:No_excess}, the number of osculating
lines is $c_{C_{\beta}+1}(\mathcal{P}^{C_{\beta}}(Y))\ev^{*}([Y]^{\vee})$,
where $C_{\beta}=s-1$. It a general fact that $\M 0(\mathbb{P}^{s},1)$
and $\M 1(\mathbb{P}^{s},1)$ are canonically isomorphic to, respectively,
the Grassmannian $G=G(2,s+1)$ of lines in $\mathbb{P}^{s}$ and its
universal family. There exists a rank 2 tautological vector bundle
$\mathcal{E}$ on $G$ such that $\M 1(\mathbb{P}^{s},1)\cong\mathbb{P}(\mathcal{E})$.
Moreover, $\psi$ coincides with the first Chern class of the relative
cotangent bundle of the natural map $\pi:\M 1(\mathbb{P}^{s},1)\ra G$.
From the exact sequence (\ref{eq:jetbundle}) we get, by a simple
recursion,
\begin{eqnarray*}
c_{s}(\mathcal{P}^{s-1}(Y)) & = & c_{1}(\mathbb{L}^{\otimes s-1}\otimes\ev^{*}(\mathcal{O}_{\mathbb{P}^{s}}(d)))\cdot c_{s-1}(\mathcal{P}^{s-2}(Y))\\
 & = & \prod_{i=0}^{s-1}c_{1}(\mathbb{L}^{\otimes i}\otimes\ev^{*}(\mathcal{O}_{\mathbb{P}^{s}}(d)))\\
 & = & d\xi\prod_{i=1}^{s-1}(i\psi+d\xi),
\end{eqnarray*}
where $\xi:=c_{1}(\ev^{*}(\mathcal{O}_{\mathbb{P}^{s}}(1)))$. By
definition $\ev^{*}([Y]^{\vee})=\frac{1}{d}\xi^{s-1}$, so
\begin{eqnarray*}
c_{s}(\mathcal{P}^{s-1}(Y))\ev^{*}([Y]^{\vee}) & = & \left(d\xi\prod_{i=1}^{s-1}(i\psi+d\xi)\right)\frac{1}{d}\xi^{s-1}\\
 & = & \xi^{s}\prod_{i=1}^{s-1}(i\psi+d\xi)
\end{eqnarray*}
Since $\xi^{i}=0$ if $i>s$, we have $c_{s}(\mathcal{P}^{s-1}(Y))[Y]^{\vee}=\xi^{s}(s-1)!\psi^{s-1}$.
Moreover, it is known that $\psi=\pi^{*}c_{1}(\mathcal{E}^{\vee})-2\xi$
(see, e.g., \cite[Theorem 11.4]{MR3617981}), so
\[
c_{s}(\mathcal{P}^{s-1}(Y))[Y]^{\vee}=(s-1)!\xi^{s}\pi^{*}c_{1}(\mathcal{E}^{\vee})^{s-1}.
\]
The degree of the zero cycle $\xi^{s}\pi^{*}c_{1}(\mathcal{E}^{\vee})^{s-1}$
is equal to the number of lines through a point and $s-1$ general
linear subspaces of codimension $2$. To prove that such number is
$1$, we can use Schubert calculus as explained in \cite[Chapter 4]{MR3617981}.
The Schubert cycle of lines through a codimension $2$ linear subspace
is $\sigma_{(1,0)}$. The Schubert cycles of lines through a point
is $\sigma_{(s-1,0)}$. Using Pieri's formula, for each integer $k\le s-1$
we have $(\sigma_{(1,0)})^{k}\cdot\sigma_{(s-1,0)}=\sigma_{(s-1,k)}$.
Finally 
\[
\xi^{s}\pi^{*}c_{1}(\mathcal{E}^{\vee})^{s-1}=(\sigma_{(1,0)})^{s-1}\cdot\sigma_{(s-1,0)}=\sigma_{(s-1,s-1)}=1.
\]
\end{proof}
On the other hand in the case $n=2$, (\ref{eq:abx}) reduces to
\begin{eqnarray}
\OC(2,\mathbb{P}^{s}) & = & C_{2}!I_{1,2}^{\mathbb{P}^{s}}(\mathrm{pt})-\frac{1}{2}(C_{1}+1)^{2}\OC(1,\mathbb{P}^{s})^{2}\label{eq:OC2}\\
 & = & (2s)!I_{1,2}^{\mathbb{P}^{s}}(\mathrm{pt})-\frac{1}{2}(s(s-1)!)^{2}.\nonumber 
\end{eqnarray}
We have seen in Example \ref{exa:osculatingOfPlaneCurve} that $\OC(2,\mathbb{P}^{2})=1$.
The case $\OC(2,\mathbb{P}^{3})=27$ was proved by Darboux. By (\ref{eq:OC2})
we have $I_{1,2}^{\mathbb{P}^{s}}(\mathrm{pt})=\frac{1}{2^{s+1}}$
for $s=2,3$.

\subsection{Computational aspects}

The aim of this subsection is to give a \textit{Wolfram Mathematica}
code to compute explicitly $\OC(\beta,X)$ for some $X$.

If $H^{2}(X,\Z)=\Z$, then by Poincarè duality all the effective homology
classes of a curve are multiple of a unique homology class. In those
cases, in the formula (\ref{eq:abx}) $K$ is equivalent to an ordered
partition of an integer $n$, where $n$ is a multiple of the cohomology
class generating $H^{2}(X,\Z)$. We constructed a code for the case
$X=\mathbb{P}^{s}$, in the following way. Using the command $\mathtt{IntegerPartitions[n,\{2,n\}]}$,
we get all \textbf{unordered} partitions of $n$. Let $K$ be one
of those partition, given by a \textit{list} of numbers. So we implement
Equation (\ref{eq:abx}) recursively, but we multiply by $\mathtt{Length[Permutations[K]]}$
to correct the fact that the partitions are unordered. The final code
is the following ($\OC(n,s)$ is $\OC(n[\mathrm{line}],\mathbb{P}^{s})$):

\noindent\begin{minipage}[t]{1\columnwidth}%
\centering\begin{verbatim}

OC[n_, s_]:=((s+1)*n-2)!/n!^(s+1)-Sum[(((s+1)*n-2)!/((s+1)*n
-Length[K])!)*(Length[Permutations[K]]/Length[K]!)*Product
[((s+1)*K[[i]]-1)*OC[K[[i]],s],{i,1,Length[K]}],
{K,IntegerPartitions[n,{2,n}]}];

\end{verbatim}%
\end{minipage} We have made several computer checks using \textsf{GROWI} \cite{GROWI}.
All results were as expected.

The code we give for $\OC(n[\mathrm{line}],\mathbb{P}^{s})$ can be
generalized for other varieties. What really changes is that we have
to find a way to write all the partitions of an effective homology
class $\beta$. Let $t>0$ be an integer and let $V\subset\Z^{\times t}$
be the convex cone generated by the coordinate vectors $(1,0,...,0),(0,1,0,...,0),...,(0,...,0,1)$.
For any $\beta=(\beta_{1},...,\beta_{t})\in V$, in Figure \ref{Figura}
we defined the function $\mathtt{SetP[\beta\_]}$ which gives all
unordered partitions of $\beta$ with summands in $V$. Such a function
is useful in the case that the variety $X$ has the effective cone
of $1$-cycles generated by the coordinate vectors, for example when
$X$ is homogeneous (see, e.g., \cite[Proposition 3.3,3.4]{Betti}).

Now we explain briefly the code in Figure \ref{Figura}. To avoid
confusion with notation, we will see our procedure just in an example,
leaving the general case as a formality. Suppose that $\beta$ is
a $t$-uple of non negative integers, for example $t=2$ and $\beta=(3,4)$.
Using the command $\mathtt{Catenate@@\{Array[Table[\#,\beta[[\#]]]\&,Length[\beta]]\}}$
we construct from $(3,4)$ the following \textit{list}:
\begin{equation}
\{\mathbf{1},\mathbf{1},\mathbf{1},\mathbf{2},\mathbf{2},\mathbf{2},\mathbf{2}\}.\label{eq:12}
\end{equation}
We applied \textbf{bold} to indicate that those are not numbers, but
symbols. So, we have the symbol $\mathbf{1}$ three times, and the
symbol $\mathbf{2}$ four times. Now we use the command $\mathtt{SetPartitions[]}$
that generates a list of all the partitions (as a list) of (\ref{eq:12}).
An example of such a list is
\begin{equation}
\{\{\mathbf{1}\},\{\mathbf{1},\mathbf{2},\mathbf{2},\mathbf{2}\},\{\mathbf{1},\mathbf{2}\}\}.\label{eq:1122}
\end{equation}
We use the commands $\mathtt{DeleteDuplicatesBy[...,Sort]}$ to remove
all the repetitions, and $\mathtt{Delete[...,1]}$ to avoid the first
partition, which is the trivial one given in (\ref{eq:12}). We denote
by $\mathtt{ParL[\beta\_]}$ the set of the partitions in the form
(\ref{eq:1122}). 

After that, we define the command $\mathtt{Conv[list\_,t\_]}$. It
converts a list of symbols like $\{\mathbf{1},\mathbf{2},\mathbf{2},\mathbf{2}\}$
into a $t$-uple of numbers. For example, in the list $\{\mathbf{1}\}$
the symbol $\mathbf{1}$ appears one time, and $\mathbf{2}$ appears
zero times. So, $\mathtt{Conv[\{\mathbf{1}\},2]}=(1,0)$. In the same
spirit we have $\mathtt{Conv[\{\mathbf{1},\mathbf{2},\mathbf{2},\mathbf{2}\},2]}=(1,3)$
and $\mathtt{Conv[\{\mathbf{1},\mathbf{2}\},2]}=(1,1)$. So, (\ref{eq:1122})
will be converted in the following partition of $\beta$:
\begin{equation}
\{(1,0),(1,3),(1,1)\}.\label{eq:(1,0)}
\end{equation}
Finally we define $\mathtt{SetP[\beta\_]}$, which is the list of
all partitions of $\beta$ in the form (\ref{eq:(1,0)}). We constructed
it just by applying $\mathtt{Conv[,]}$ to $\mathtt{ParL[\beta\_]}$.

Once we have $\mathtt{SetP[\beta\_]}$, we still need $C_{\beta}$
and $I_{1,\beta}^{X}(\mathrm{pt})$. When $X=\mathbb{P}^{s_{1}}\times...\times\mathbb{P}^{s_{t}}$,
in Figure \ref{Figura} they are represented, respectively, by the
commands $\mathtt{Cb[\beta\_,s\_]}$ and $\mathtt{Ix[\beta\_,s\_]}$.
We used that $I_{1,\beta}^{X_{1}\times\cdots\times X_{t}}(\mathrm{pt})=I_{1,\beta_{1}}^{X_{1}}(\mathrm{pt})\cdots I_{1,\beta_{t}}^{X_{t}}(\mathrm{pt})$,
see \cite[2.5]{kontsevich1994gromov}. At the very end, we have in
the last line a formula for $\OC(\beta,X)$. We used again $\mathtt{Length[Permutations[K]]}$,
where $K\in\mathtt{SetP[\beta]}$, to compute each partition with
the correct multiplicity. The command to get the value of $\OC((\beta_{1},...,\beta_{t}),\mathbb{P}^{s_{1}}\times...\times\mathbb{P}^{s_{t}})$,
where $s_{i},\beta_{i}\ge0$ for all $i$, is
\[
\mathtt{OC[\{\beta_{1},\ldots,\beta_{t}\},\{s_{1},\ldots,s_{t}\}].}
\]
For example, to compute $\OC((3,4),\mathbb{P}^{5}\times\mathbb{P}^{6})$
we use $\mathtt{OC[\{3,4\},\{5,6\}]}$. The result is precisely $1237651772190153893157497812054065\times10^{7}$.
A \textit{Wolfram Mathematica notebook} of this code can be provided
upon request.
\begin{figure}
\centering\includegraphics[scale=0.4]{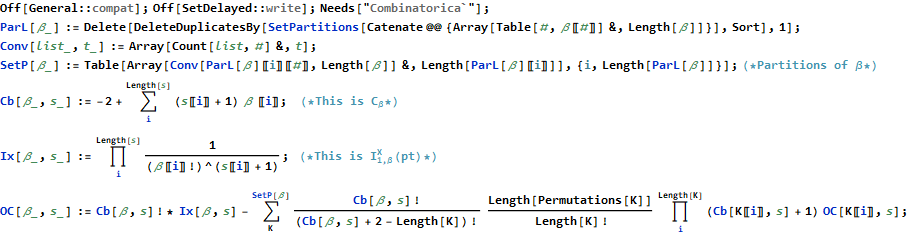}\caption{\label{Figura}Osculating curves of $X=\mathbb{P}^{s_{1}}\times...\times\mathbb{P}^{s_{t}}$.}
\end{figure}

\bibliographystyle{amsalpha}
\bibliography{1C__Users_magno_Dropbox_Universita_Ricerche_Asymptotic_Lines_Article_Osculating_conics_ref}

\end{document}